\newtheorem{theorem}{Theorem}[section]
\newtheorem{lemma}[theorem]{Lemma}
\newtheorem{applemma}{Lemma}
\newtheorem{proposition}[theorem]{Proposition}
\newtheorem{corollary}[theorem]{Corollary}
\newtheorem{rem}{Remark}
\newtheorem{ass}{Assumption}
\newtheorem{definition}{Definition}[section]
\newcommand{\newedit}[1]{\ifthenelse{\boolean{showcomments}}
{\textcolor{ForestGreen}{#1}}{}}
\newcommand{\bassam}[1]{\ifthenelse{\boolean{showcomments}}
{\textcolor{Blue}{(Bassam says: #1)}}{}}
\newcommand{\emma}[1]{\ifthenelse{\boolean{showcomments}}
{\textcolor{VioletRed}{(Emma says: #1)}}{}}
\newcommand{\ifneeded}[1]{\ifthenelse{\boolean{showcomments}}
{\textcolor{Gray}{#1}}{}}
\newcommand{\edit}[1]{\ifthenelse{\boolean{showedit}}
{\textcolor{Blue}{#1}}{}}
\newcommand{\rel}{\mathrm{Re}\{\lambda_2\}}
\newcommand{\rell}{\mathrm{Re}\{\lambda_l\}}
\newcommand{\imll}{\mathrm{Im}\{\lambda_l\}}
\newcommand{\Gg}{\mathcal{G}} %Graph 
\newcommand{\Ggn}{\mathcal{G}_N} %Graph in sequence
\newcommand{\Vv}{\mathcal{V}} %Graph vertex set
\newcommand{\Ee}{\mathcal{E}} %Graph edge set
\newcommand{\Aa}{\mathcal{A}} %Graph neighbor set
\newcommand{\nth}{$n^{\text{th}}$ }
\newcommand{\ddt}{\frac{\mathrm{d}}{\mathrm{d}t}}
\newcolumntype{L}[1]{>{\raggedright\let\newline\\\arraybackslash\hspace{0pt}}m{#1}}
\newcolumntype{C}[1]{>{\centering\let\newline\\\arraybackslash\hspace{0pt}}m{#1}}
\newcolumntype{R}[1]{>{\raggedleft\let\newline\\\arraybackslash\hspace{0pt}}m{#1}}
\definecolor{gray3}{rgb}{0.75, 0.75, 0.75}
\definecolor{gray2}{rgb}{0.5, 0.5, 0.5}
\definecolor{gray1}{rgb}{0.25, 0.25, 0.25}
\definecolor{gray0}{rgb}{0.15, 0.15, 0.15}
\definecolor{emmagreen1}{rgb}{0, 0.5, 0.1}
\definecolor{emmaorange1}{rgb}{0.99, 0.5, 0}
\definecolor{emmablue1}{rgb}{0, 0.25, 0.5}
\definecolor{emmaskyblue1}{rgb}{0.4, 0.8, 0.95}
\definecolor{emmapurple1}{rgb}{0.5, 0.25, 0.6}
\journal{Automatica}
\begin{document}

\begin{frontmatter}

%% Title, authors and addresses

%% use the tnoteref command within \title for footnotes;
%% use the tnotetext command for theassociated footnote;
%% use the fnref command within \author or \address for footnotes;
%% use the fntext command for theassociated footnote;
%% use the corref command within \author for corresponding author footnotes;
%% use the cortext command for theassociated footnote;
%% use the ead command for the email address,
%% and the form \ead[url] for the home page:
%% \title{Title\tnoteref{label1}}
%% \tnotetext[label1]{}
%% \author{Name\corref{cor1}\fnref{label2}}
%% \ead{email address}
%% \ead[url]{home page}
%% \fntext[label2]{}
%% \cortext[cor1]{}
%% \address{Address\fnref{label3}}
%% \fntext[label3]{}

\title{Scale Fragilities in Localized Consensus Dynamics }

%% use optional labels to link authors explicitly to addresses:
%% \author[label1,label2]{}
%% \address[label1]{}
%% \address[label2]{}

\author[Auth1]{Emma Tegling}\ead{emma.tegling@control.lth.se}   % Add the 
\author[Auth2]{Bassam Bamieh}\ead{bamieh@engineering.ucsb.edu}               % e-mail address 
\author[Auth3]{Henrik Sandberg}\ead{hsan@kth.se}  % (ead) as shown

\address[Auth1]{Department of Automatic Control, Lund University, P.O. Box 118, SE-221 00 Lund, Sweden }  % Please supply                                              
\address[Auth2]{Department of Mechanical Engineering at the University of California at Santa Barbara, Santa Barbara, CA 93106, USA}        % here.
\address[Auth3]{School of Electrical Engineering and Computer Science, KTH Royal Institute of Technology, SE-100 44 Stockholm, Sweden }             % full addresses

\begin{abstract}
We consider distributed consensus in networks where the agents have integrator dynamics of order two or higher ($n\ge 2$). We assume all feedback to be localized in the sense that each agent has a bounded number of neighbors and consider a scaling of the network through the addition of agents {in a modular manner, i.e., without re-tuning controller gains upon addition}. We show that standard consensus algorithms, {which rely on relative state feedback, }are subject to what we term scale fragilities, meaning that stability is lost as the network scales. 
For high-order agents ($n\ge 3$), we prove that no consensus algorithm with fixed gains can achieve consensus in networks of any size. That is, while a given algorithm may allow a small network to converge, it causes instability if the network grows beyond a certain finite size. This holds in families of network graphs whose algebraic connectivity, that is, the smallest non-zero Laplacian eigenvalue, is decreasing towards zero in network size (e.g. all planar graphs). %Equivalently, graphs with bounded isoperimetric constants. 
For second-order consensus ($n = 2$) we prove that the same scale fragility applies to directed graphs that have a complex Laplacian eigenvalue approaching the origin (e.g. directed ring graphs). The proofs for both results rely on Routh-Hurwitz criteria for complex-valued polynomials and hold true for general directed network graphs. We survey classes of graphs subject to these scale fragilities, discuss their scaling constants, and finally prove that a sub-linear scaling of nodal neighborhoods can suffice to overcome the issue.  
\end{abstract}

\begin{keyword}
%%% keywords here, in the form: keyword \sep keyword
%
%%% PACS codes here, in the form: \PACS code \sep code
%
%% MSC codes here, in the form: \MSC code \sep code
%% or \MSC[2008] code \sep code (2000 is the default)
Multi-Agent Networks \sep Large-Scale Systems \sep
Fundamental Limitations
\end{keyword}

\end{frontmatter}

%% \linenumbers

%% main text
\section{Introduction}
Characterizing the dynamic behaviors of networked or multi-agent systems has been an active research area for many years. In particular, since the works by Fax and Murray~\cite{FaxMurray}, Olfati-Saber and Murray~\cite{OlfatiSaber2004}, and Jadbabaie \emph{et al.}~\cite{Jadbabaie2003}, the prototypical sub-problem of distributed consensus has been the subject of significant research efforts. While the particular modeling aspects vary, the consensus objective is to coordinate agents in a network to a common state of agreement. Applications range from distributed computing and sensing to power grid synchronization and coordination of unmanned vehicles~\cite{OlfatiSaber2007}.

The most traditional consensus problem is of first order, meaning that agents are modeled as single integrators with a state that develops according to a weighted sum of differences between states of neighboring agents, {that is, relative state feedback}. Second-order consensus can model coordination of agents with mass and is used to study formation control in multi-vehicle networks. The corresponding higher-order problem, to which most results in this paper pertain, has also received significant attention, as in~\cite{Ren2007, Ni2010, Rezaee2015,Jiang2009,Zuo2017,Radmanesh2017}. Here, each agent is modeled as an \nth order integrator, and the control signal is a weighted sum of {relative feedback terms. }%differences in all states. 
This can be viewed as an important theoretical generalization of the first- and second-order algorithms~\cite{Jiang2009}, but also has practical relevance. For example, position, velocity, as well as acceleration feedback play a role in flocking behaviors, resulting in a model where $n = 3$~\cite{Ren2007}.

Existing literature has typically focused on deriving conditions for convergence of a given set of agents to consensus, and how such conditions depend on various properties of the network. For example, directed communication, a switching or random topology~\cite{Ni2010}, or a leader-follower structure~\cite{Zuo2017}. This paper takes a different perspective and concerns the \emph{scalability} of given consensus algorithms to large networks {under a modular design principle}. That is, we assume that the interaction rules between agents are \emph{fixed}, {(i.e., pre-designed)} and \emph{localized}, and grow the network through the addition of more and more agents. It has previously been observed that this type of {modular }scaling can lead to poor dynamic behaviors in first- and second-order consensus problems, such as a lack of network coherence~\cite{Bamieh2012,Patterson2014, SiamiMotee2015,Tegling2019}. These behaviors are a question of control \emph{performance}. In this paper, we show that the question of scalability in high-order consensus is more fundamental: can \emph{stability} be maintained as the network grows?

This paper shows that both second- and higher-order consensus ($n\ge 2$) are subject to \emph{scale fragilities} in certain classes of network graphs. These imply that stability (and thereby convergence to consensus) is lost if the network grows beyond some finite size. For $n\ge 3$, our result is particularly clear-cut: the consensus algorithm treated in, for example, \cite{Ren2007} does not scale stably in any family of graphs whose algebraic connectivity decreases towards zero in network size. % nodal neighborhoods are localized, and the algebraic connectivity therefore decreases towards zero in network size. 

The algebraic connectivity, that is, the smallest non-zero eigenvalue of the graph Laplacian, decreases towards zero in families of graphs where nodal neighborhoods are \emph{localized} in the sense that they are bounded in size and reach (the formal definition is given through the graph's isoperimetric, or Cheeger, constant). Here, we review this property for graphs such as lattices, trees, and planar graphs, and derive the rates at which their respective algebraic connectivity decreases. 
In leader-follower consensus of order $n\ge 3$, the scale fragility arises in any undirected graph family where the neighborhood size is bounded. This latter result was observed in the context of vehicular strings in~\cite{Swaroop2006,Barooah2005}. Here, we generalize that result to leaderless consensus and general directed, weighted graphs. 

For second-order consensus ($n = 2$), the scale fragility applies only to particular classes of directed graphs. These are characterized by a complex Laplacian eigenvalue that approaches the origin from anywhere but along the real axis. This applies, for example, to directed ring graphs. The particular result for ring graphs has previously been reported in~\cite{Cantos2016,HermanThesis,Stuedli2017}, but our work provides a significant generalization.  The result implies that ring-shaped vehicular formations, such as those where adaptive cruise control is used to regulate spacing and velocity to the preceding vehicle, see~\cite{Gunter2021}, are at risk of becoming unstable.

{We remark that the phenomenon we describe in this paper is distinct from the issue of string stability in vehicular strings. String instability, see e.g.~\cite{Seiler2004, Swaroop2006}, implies that disturbances are amplified along the string of vehicles, though the overall system dynamics can be stable. It is therefore a notion of performance rather than stability, see also~\cite{Besselink2018}. Here, we describe a loss of closed-loop stability, subject to a {modular }scaling of the network.  }

The fact that consensus may fail to scale stably to large networks has, to the best of our knowledge, not been observed in literature apart from the aforementioned works. While it is noted in~\cite{Ren2007, Jiang2009} that controller gains in high-order consensus must be chosen with care to ensure stability, we point out that such a choice can only be done with knowledge of the algebraic connectivity -- a global network property that changes with network size. {This prevents the algorithm from being implemented through modular design principles.}%This means that the algorithm cannot be implemented in a plug-and-play manner without knowledge of global properties. 

{The scale fragilities we describe can in principle be attributed to the relative state feedback upon which the consensus algorithm is based. It is known that a restriction to relative feedback imposes performance and design limitations; an issue that was analyzed formally in~\cite{Jensen2022}. In this paper, we also discuss how the scalability can be achieved if the controller has access to absolute feedback.   }

The locality property, that is, bounded nodal neighborhoods, is also key for our results. A natural question is therefore how nodal neighborhoods would need to scale to alleviate the scale fragility. Interestingly, we prove using a lattice topology that it can suffice to grow neighborhoods as $N^{2/3}$, where $N$ is the network size. We note that this only holds for leaderless consensus; leader-follower consensus requires neighborhoods proportional to $N$. 

The present paper extends our preliminary work in~\cite{Tegling2019ACC}, where the result on high-order ($n\ge3$) consensus was first reported. The corresponding result herein is improved in its formalism and generalized to all directed graphs families. Our characterization of graphs with decreasing algebraic connectivity has been expanded with a general analytic criterion. All other results are new. 

The remainder of this paper is organized as follows. We next introduce the \nth order consensus algorithm along with important definitions and assumptions. In Section~\ref{sec:mainresult} we give the result for high-order consensus. We also discuss classes of graphs where the result applies and give numerical examples. Section~\ref{sec:2ndorder} then presents corresponding results for second-order consensus. In Section~\ref{sec:mitigation} we discuss ways to retrieve {scalable stability}, e.g. by scaling nodal neighborhoods, and we conclude with a discussion in Section~\ref{sec:discussion}. 

\section{Problem setup}
\label{sec:setup}
We now introduce the network model along with the \nth order consensus algorithm. This algorithm is a straightforward extension to standard first- and second-order consensus and has previously been considered in~\cite{Ren2007,Ni2010, Rezaee2015}. %In Section~\ref{sec:alternative} we will discuss some generalizations and alternative formulations. 

%We begin by introducing the modeling framework for the \nth order consensus algorithm. The algorithm we consider adheres to the ones considered in~\cite{Ren2006,Ren2007,Ni2010, Rezaee2015} and is a straightforward extension to the better-known standard first- and second-order consensus algorithms.

\subsection{Network model and definitions}
Consider a network modeled by the graph $\Ggn = \{\mathcal{V}_N, \mathcal{E}_N\}$ with $N = |\mathcal{V}_N|$ nodes. The set ${\mathcal{E}_N\subset \mathcal{V}_N\times \mathcal{V}_N}$ contains the edges, each of which has an associated nonnegative weight $w_{ij}$.
% and $\mathcal{W}: \mathcal{E}_N\mapsto \mathbb{R}^+$ is a weight function which assigns to each edge $(i,j) \in \mathcal{E}$ a nonnegative weight $w_{ij}$.
We generally let the graph $\Ggn$ be directed, so the edge~$(i,j) \in \Ee_N$ points from node~$i$ (the tail) to node~$j$ (the head).  The neighbor set~$\mathcal{N}_i$ of node~$i$ is the set of nodes~$j$ to which there is an edge ${(i,j)\in \mathcal{E}}$. 
The outdegree of node~$i$ is defined as ${d^+_i = \sum_{j =1}^N w_{ij}}$ and its indegree is $d^-_i = \sum_{j =1}^N w_{ji}$ (${w_{ij}=0}$ if $(i,j)\notin \mathcal{E}$). 
The graph~$\Ggn$ is \emph{balanced} if ${d^+_i = d^-_i}$ for all $i \in \mathcal{V}_N$ and  \emph{undirected} if $(i,j) \in \mathcal{E}_N\Rightarrow (j,i) \in \mathcal{E}$ for all $i,j \in \mathcal{V}_N$ and $w_{ij} = w_{ji}$. It %is \emph{strongly connected} if there is a directed path connecting any two nodes $i,j \in \mathcal{V}$ and 
has a \emph{connected spanning tree} if there is a path from some node $i\in \mathcal{V}_N$ to any other node~$j \in \mathcal{V}_N\backslash\{i\}$.

Going forward, we will model networks with an increasing numbers of agents. 
We therefore consider $\Ggn$ as a member of a sequence, or a \emph{family}, of graphs~$\{\Ggn\}$ in which the network size $N$ is increasing. We remark that $\Gg_{N}$ need not be a subgraph of $\Gg_{N+1}$ for our results to hold.
%We therefore consider sequences, or \emph{families}, of graphs~$\{\Ggn\}$ in which the network size $N$ is increasing. 
%\emma{We may also write $\{\Ggn\}_{N\rightarrow \infty}$ but I think it is sufficiently clear as it is.}

The graph Laplacian~$L$ of~$\Ggn$ is defined as follows:
\vspace{-0.5mm}
\begin{equation}
\label{eq:laplaciandef}
[L]_{ij} = \begin{cases}  -w_{ij}& \mathrm{if}~ j \neq i~\mathrm{and}~j \in \mathcal{N}_i
\\ 
\sum_{k \in \mathcal{N}_i} w_{ik} & \mathrm{if}~ j = i\\
0 & \mathrm{otherwise.}
 \end{cases}
\end{equation}
%\emma{Do we need: We use the notation $L(\Ggn)$ where explicitness is needed.?}
Denote by $\lambda_l$ (or $\lambda_l(\Ggn)$ where explicitness is needed) with $l = 1,\ldots,N$ the eigenvalues of~$L$. Zero is a simple eigenvalue of~$L$ if and only if the graph has a connected spanning tree, which we assume henceforth. Remaining eigenvalues are in the complex right half plane~(RHP), and numbered so that $0 = \lambda_1 < \mathrm{Re}\{\lambda_2\}\le \ldots \le \mathrm{Re}\{\lambda_N \}$.% \emma{I don't think we'll need the notation $L(\Ggn)$}
%In undirected graphs, the eigenvalues are real-valued and~$\lambda_2$ is called the algebraic connectivity. See Section~\ref{sec:graphs} for directed graphs.  
%Throughout this paper, we assume that all eigenvalues of $L$ are simple so that $L$ is diagonalizable. \emma{check!!}
%We make the following assumption:
%\begin{ass}[Normal graph Laplacian]
%\end{ass}
%In this paper, we will assume that 
The graph Laplacian~$L$ is called \emph{normal} if $L^TL = LL^T$. %In this case, $L$ is unitarily similar to a diagonal matrix. 
If the graph is undirected, $L$ is symmetric and thereby normal. For a directed graph, normality of $L$ implies that $\Ggn$ is balanced. 
%
% 
%%\emma{In the ACC paper we assumed normality of the Laplacian. Now, not even diagonalizability is required for the main proof. }
%%\emma{Introduce families of graphs? $\{\Gg_N\}$, $N\rightarrow \infty$? Check if needed!}
%

\subsection{\nth order consensus}
The local dynamics of each agent~$i\in \mathcal{V}_N$ is modeled as a chain of $n$ integrators: 
\begin{align*}
\ddt x_i^{(0)}(t)  &= x_i^{(1)}(t)\\
& ~\vdots \\
\ddt x_i^{(n-2)}(t)  &= x_i^{(n-1)}(t) \\ 
\ddt x_i^{(n-1)}(t)  &= u_i(t)
\end{align*}
%
%state vector is therefore
%\[ \xi_i(t)  = \begin{bmatrix}
%x_i^{(0)}(t) \\ \dot{x}_i^{(1)}(t) \\ \vdots \\ x^{\mathrm{(n-1)}}_k(t)
%\end{bmatrix},  \]
where we assume a scalar state $x_i(t) \in \mathbb{R}$ (see Remark~\ref{rem:dimension}), {collected in the vector $x = [x_1,x_2,\ldots,x_N]^T\in \mathbb{R}^N$.} 
The notation for time derivatives is such that $x_i^{(0)}(t) = x_i(t)$, $x_i^{(1)}(t) = \ddt x_i(t)  = \dot{x}_i(t)$ etc. until $x_i^{(n)}(t) = \frac{\mathrm{d}^n}{\mathrm{d}t^n} x_i(t)$. % Occasionally, other superscript indices are used, which are written without parentheses. 
Going forward, we will often drop the time dependence in the notation. 

We consider the following \nth order consensus algorithm:
%\begin{multline}
%u_i = -\sum_{j \in \mathcal{N}_i} a^0_{ij} (x_i - x_j)  -\sum_{j \in \mathcal{N}_i} a^1_{ij} (\dot{x}_i - \dot{x}_j) \\  -\ldots -\sum_{j \in \mathcal{N}_i} a^{n-1}_{ij} (x^{(n-1)}_i - {x}^{(n-1)}_j),
%\end{multline}
\begin{equation}
\label{eq:consensuscompact}
%u_i = -\sum_{j \in \mathcal{N}_i} w_{ij} \sum_{k = 0}^{n-1} a_k(x_i^{(k)} - x_j^{(k)})
u_i = - \sum_{k = 0}^{n-1} a_k\sum_{j \in \mathcal{N}_i} w_{ij}(x_i^{(k)} - x_j^{(k)})
\end{equation}
where the $a_k>0$ are fixed gains.
The feedback in~\eqref{eq:consensuscompact} is termed \emph{relative} as it only based on differences between states of neighboring agents. 
% In the algorithm~\eqref{eq:consensuscompact} the feedback is based solely on \emph{relative} state measurements, i.e. differences between states of  neighboring nodes. We term this relative feedback. 
The impact of absolute feedback, where the controllers have access to measurements of the absolute local state, is treated in Section~\ref{sec:mitigation}.
%\begin{example}[Third order consensus]
%If $n = 3$, the closed-loop dynamics at node~$i$ become
%\[ x_i^{(3)} \!=\! - \!\!\!\sum_{j \in \mathcal{N}_i} \!\!a_0w_{ij} (x_i - x_j)  -\!\!\sum_{j \in \mathcal{N}_i} \!\!a_1w_{ij} (\dot{x}_i - \dot{x}_j)  -\!\!\sum_{j \in \mathcal{N}_i} \!\!a_2w_{ij} (\ddot{x}_i - \ddot{x}_j). \]
%\end{example}

Defining the full state vector ${\xi = [x^{(0)},x^{(1)},\ldots,x^{(n-1)}]^T \in \mathbb{R}^{Nn}}$, we can write the system's closed-loop dynamics as
\begin{equation}
\label{eq:closedloop}
\ddt \xi = \underbrace{\begin{bmatrix}
0 & I_N & 0 & \cdots & 0\\
0 & 0 & I_N & \cdots &\vdots \\
0 & 0 & 0 & \ddots &\vdots \\
0 & 0 & 0 & \cdots &I_N \\
-a_0L & -a_1L & -a_2L & \cdots & -a_{\mathrm{n-1}}L
\end{bmatrix}}_{\mathcal{A}} \xi,
\end{equation}
where the graph Laplacian~$L$ was defined in~\eqref{eq:laplaciandef} and $I_N$ denotes the $N\times N$ identity matrix.

\begin{rem} \label{rem:dimension}
We limit the analysis to a scalar information state, though an extension to $x_i(t) \in \mathbb{R}^m$ is straightforward {if the same consensus algorithm is applied in all coordinate directions}. In this case, the system dynamics can be written $\dot{\xi} =   (\mathcal{A} \otimes I_m) \xi$, where $\otimes$ denotes the Kronecker product. 
Our results, which concern the stability of~$\mathcal{A}$, would not change.
\end{rem}
%\begin{rem}
%\label{rem:alternativemodel}
%{We focus on the standard model for \nth order consensus in~\eqref{eq:closedloop}. However, the upcoming analysis applies in any system dynamics with $n$ integrations of relative measurements. Distributed PI control in double-integrator networks is an important case of such dynamics, which we discuss in Section~\ref{sec:PI}.}
%%The analysis in this paper also applies to other systems where the local dynamics contains $n$ integrators and the feedback is relative. We discuss one such case in Section~\ref{sec:PI}.
%\end{rem}
%\vspace{0.8mm}

\subsubsection{Leader-follower consensus}
\label{sec:LF}
%The consensus algorithm~\eqref{eq:consensuscompact} describes leaderless consensus where all agents' states evolve depending on their neighbors. In some cases, it is relevant to instead consider leader-follower consensus where the state of one agent (W.L.O.G. number 1) is at a desired setpoint (W.L.O.G transfered to the origin) and remaining agents converge to the same state. 
It will also be relevant to consider leader-follower consensus, where the state of one agent (the leader) is fixed at a desired setpoint and remaining agents converge to that same state (assuming there is a directed path to each of them from the leader node). Without loss of generality, take Agent 1 to be the leader and set $x_1 = \dot{x}_1 = \ldots =x_1^{n} \equiv 0$. The closed-loop dynamics for remaining agents can then be written 
%We may also consider leader-follower consensus as in~\cite{Swaroop2006}. Here, the state of one agent (without loss of generality, number 1) is assumed fixed, meaning that it acts as a leader for remaining agents (under the assumption that there is a directed path to each of them from agent 1). WLOG we can then set $x_1 = \dot{x}_1 = \ldots,x_1^{n} \equiv 0$. 
%The closed-loop dynamics for remaining agents can be written 
\begin{equation}
\label{eq:closedloopred}
\ddt \bar{\xi}= \underbrace{\begin{bmatrix}
0 & I_{N-1} & 0 & \cdots & 0\\
0 & 0 & I_{N-1} & \cdots &\vdots \\
0 & 0 & 0 & \ddots &\vdots \\
0 & 0 & 0 & \cdots & I_{N-1} \\
-a_0\bar{L} & -a_1\bar{L}  & -a_2\bar{L}  & \cdots & -a_{n-1}\bar{L} \\
\end{bmatrix}}_{\bar{\mathcal{A}}} \bar{\xi} ,
\end{equation}
where $\bar{L}$ is the \emph{grounded} graph Laplacian obtained by deleting the first row and column of $L$, and~$\bar{\xi}$ is obtained by removing the states of Agent~1. Note that~$\bar{L}$ unlike~$L$ has all of its eigenvalues in the right half plane~\cite{Xia2016}. 

\subsection{Conditions for consensus and {scalable stability}}
\label{sec:reachconsensus}
{The network of agents is said to be achieving consensus} if $x_i^{(k)} \rightarrow x_j^{(k)}$ for all $i,j \in \mathcal{V}_N$, all $k = 0,1,\ldots,n-1$, {and for any initial state}. It is known that the algorithm~\eqref{eq:consensuscompact} achieves consensus if the eigenvalues of~$\mathcal{A}$ are in the left half plane, apart from exactly $n$ zero eigenvalues that are associated with the drift of the network average. This condition is in line with standard results for first- and second-order consensus, and is shown in~\cite{Ren2007} for $n = 3$:
\begin{lemma}[\cite{Ren2007}, Theorem 3.1]
\label{thm:consensus}
In the case of $n = 3$, the algorithm~\eqref{eq:consensuscompact} achieves consensus exponentially if and only if $\mathcal{A}$ has exactly three zero eigenvalues and all of the other eigenvalues have negative real parts. 
\end{lemma}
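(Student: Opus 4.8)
The plan is to diagonalize (or triangularize) the graph-dependent part of $\mathcal{A}$ so that the closed loop decouples into $N$ scalar chains, one per Laplacian eigenvalue, and then to identify which of these modes carry disagreement. First I would exploit the Kronecker structure of $\mathcal{A}$. Writing $S=\sum_{k=1}^{n-1}E_{k,k+1}$ for the nilpotent shift acting on the derivative index, one has $\mathcal{A}=S\otimes I_N-\sum_{k=0}^{n-1}a_k\,E_{n,k+1}\otimes L$, where the only graph-dependent factor is $L$. Taking a Jordan decomposition $L=VJV^{-1}$ and applying the similarity $I_n\otimes V^{-1}$ (which commutes with $S\otimes I_N$), followed by the permutation that groups the $n$ derivatives of each Laplacian mode together, $\mathcal{A}$ becomes block upper-triangular with diagonal blocks
\begin{equation*}
C_l=\begin{bmatrix} 0 & 1 & & \\ & \ddots & \ddots & \\ & & 0 & 1 \\ -a_0\lambda_l & -a_1\lambda_l & \cdots & -a_{n-1}\lambda_l \end{bmatrix},\quad l=1,\dots,N,
\end{equation*}
each a companion matrix with characteristic polynomial $p_l(s)=s^{\,n}+\sum_{k=0}^{n-1}a_k\lambda_l\,s^{k}$. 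Hence $\mathrm{spec}(\mathcal{A})$ is the union over $l$ of the roots of $p_l$; for $n=3$ this is $p_l(s)=s^3+a_2\lambda_l s^2+a_1\lambda_l s+a_0\lambda_l$.

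Next I would isolate the zero eigenvalues. Since $a_0>0$, we have $p_l(0)=a_0\lambda_l$, so $s=0$ is a root precisely for those $l$ with $\lambda_l=0$; under the standing assumption of a connected spanning tree $\lambda_1=0$ is simple, and the corresponding block $C_1=S$ is a single nilpotent Jordan block of size $n=3$. This yields exactly three zero eigenvalues of $\mathcal{A}$ and identifies their invariant subspace: with right null vector $v_1=\mathbf{1}$ and left null vector $p^\top L=0$, the mode-$1$ coordinates $p^\top x^{(k)}$ obey $\ddt\,p^\top x^{(k)}=p^\top x^{(k+1)}$ with $p^\top x^{(n-1)}$ constant, i.e.\ a pure polynomial drift of the consensus value along $\mathbf{1}$. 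Crucially, every difference $x_i^{(k)}-x_j^{(k)}$ vanishes on this subspace, so motion confined to it is exactly ``being at consensus''.

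It then remains to tie convergence of the disagreement to the remaining modes. Expanding $x^{(k)}=\sum_l\zeta_l^{(k)}v_l$ shows that each difference $x_i^{(k)}-x_j^{(k)}=\sum_{l\ge2}\zeta_l^{(k)}(v_{l,i}-v_{l,j})$ depends only on the modes $l\ge2$ transverse to $\mathbf{1}$; these decay to zero for every initial condition if and only if all roots of $p_l$ with $l\ge2$ lie in the open left half plane, which is precisely the statement that all eigenvalues of $\mathcal{A}$ other than the three zero ones have negative real parts. For the forward direction ($\Leftarrow$), the solution splits as a polynomially growing part confined to the three-dimensional consensus subspace plus an exponentially decaying transverse part, whence $x_i^{(k)}-x_j^{(k)}\to0$ exponentially, the rate being the spectral abscissa of the transverse dynamics. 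For the converse ($\Rightarrow$), if some mode $l\ge2$ had a root with nonnegative real part, or if $\lambda_1=0$ were not simple so that $\mathcal{A}$ carried more than three zero eigenvalues, then a suitable initial condition would excite a non-decaying disagreement, contradicting (exponential) consensus.

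The main obstacle I anticipate is bookkeeping rather than conceptual: because $L$ may be non-normal and even non-diagonalizable for directed graphs, one must argue through the Jordan (or Schur) form and verify that the off-diagonal coupling introduced by the triangularization affects neither the eigenvalue count at zero nor the characterization of the transverse modes. One must also confirm that the three zero eigenvalues always assemble into a single size-$3$ Jordan block, so that the drift is genuinely polynomial and no marginal disagreement mode hides among them; this is immediate from $C_1=S$ when $\lambda_1=0$ is simple, but warrants explicit care if the statement is read for general Laplacians.
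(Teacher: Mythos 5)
Your proposal is correct, and it follows essentially the same route the paper relies on: the paper states this lemma by citation to Ren (2007), and both that reference and the paper's own proof of Theorem~3.1 use exactly your reduction --- a Jordan/similarity transform of $L$ followed by a shuffle permutation that decouples $\mathcal{A}$ into companion blocks with characteristic polynomials $p_l(s)=s^3+a_2\lambda_l s^2+a_1\lambda_l s+a_0\lambda_l$, with the three zero eigenvalues isolated in the $\lambda_1=0$ block and the disagreement carried by the modes $l\ge 2$. Your closing caveats about non-diagonalizable $L$ and the single size-3 nilpotent block are handled the same way in the paper (via the $r_l$-fold Jordan blocks and the simplicity of $\lambda_1$ under the spanning-tree assumption), so no gap remains.
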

We also require the following lemma:
\begin{lemma}[\cite{Ren2007}, Lemma 3.1]
In the case of $n = 3$, the matrix $\mathcal{A}$ has exactly three zero eigenvalues if and only if $L$ has a simple zero eigenvalue.
\end{lemma}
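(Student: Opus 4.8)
The plan is to reduce the spectral question about the $3N\times 3N$ matrix $\mathcal{A}$ to a factorized scalar condition involving the eigenvalues of $L$. The starting observation is that $\mathcal{A}$ has the structure of a block companion matrix associated with the matrix polynomial $P(s) = s^3 I_N + (a_2 s^2 + a_1 s + a_0)L$, so that its characteristic polynomial equals $\det P(s)$. First I would verify this directly: writing the eigenvalue equation $\mathcal{A}v = sv$ with $v = [v_1^T, v_2^T, v_3^T]^T$, the first two block rows force $v_2 = s v_1$ and $v_3 = s^2 v_1$, and substituting these into the third block row yields $\bigl(s^3 I_N + (a_2 s^2 + a_1 s + a_0)L\bigr)v_1 = 0$. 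Equivalently, one may quote the standard determinant identity $\det(sI_{3N}-\mathcal{A}) = \det P(s)$ for companion matrices.

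Next I would diagonalize the $s$-dependence through $L$. For each fixed $s$, the matrix $P(s)$ is a polynomial in $L$, so a Schur triangularization $L = U T U^{*}$ (with the eigenvalues $\lambda_1,\ldots,\lambda_N$ on the diagonal of $T$) renders $P(s)$ upper triangular with diagonal entries $s^3 + (a_2 s^2 + a_1 s + a_0)\lambda_l$. This gives the factorization
\begin{equation*}
\det(sI_{3N} - \mathcal{A}) = \prod_{l=1}^N \Bigl( s^3 + (a_2 s^2 + a_1 s + a_0)\lambda_l \Bigr),
\end{equation*}
valid whether or not $L$ is diagonalizable, so each eigenvalue $\lambda_l$ of $L$ contributes exactly one cubic factor to the spectrum of $\mathcal{A}$.

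The conclusion then follows by counting the roots at $s = 0$. Evaluating the $l$-th factor at $s = 0$ gives $a_0\lambda_l$; since the gain $a_0 > 0$, this vanishes precisely when $\lambda_l = 0$. When $\lambda_l \neq 0$ the factor has a nonzero constant term and contributes no zero root, whereas when $\lambda_l = 0$ the factor collapses to $s^3$ and contributes a zero root of multiplicity exactly three. Hence the algebraic multiplicity of $0$ as an eigenvalue of $\mathcal{A}$ equals exactly three times its algebraic multiplicity as an eigenvalue of $L$, and $\mathcal{A}$ has exactly three zero eigenvalues if and only if $0$ is a simple eigenvalue of $L$.

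I would expect the only care to be needed in the second step: guaranteeing the factorization without assuming $L$ is diagonalizable, since directed graphs may produce a defective $L$. The Schur-triangularization argument handles this cleanly, as it uses only that $P(s)$ is a fixed polynomial in $L$ for each $s$ and that the determinant of an upper-triangular matrix is the product of its diagonal entries. I would also flag explicitly that the argument relies on $a_0 > 0$, as assumed in~\eqref{eq:consensuscompact}; without this hypothesis a nonzero $\lambda_l$ could spuriously produce a zero eigenvalue of $\mathcal{A}$ and the equivalence would fail.
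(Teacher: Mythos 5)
Your argument is correct and follows essentially the same route as the paper: the paper cites Ren (2007) for this lemma rather than proving it, but its own proof of Theorem~\ref{thm:main} derives the identical factorization of $\det(sI_{3N}-\mathcal{A})$ into cubic factors $s^3+(a_2s^2+a_1s+a_0)\lambda_l$ (with multiplicities), using a Jordan-form block-diagonalization of $L$ where you use the block-companion determinant identity plus Schur triangularization --- both handle a defective $L$ equally well. Your count of the roots at $s=0$, and your remark that $a_0>0$ is precisely what prevents a nonzero $\lambda_l$ from contributing a zero eigenvalue, are both accurate.
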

The proofs in~\cite{Ren2007} extend straightforwardly to $n>3$.
% In this case, $\mathcal{A}$ will have exactly~$n$ zero eigenvalues~\cite[Lemma 3.1]{Ren2007}
This means that it suffices to verify that the ${(N-1)\cdot n}$ non-zero eigenvalues of~$\mathcal{A}$ have negative real parts. 

In this work, we describe systems where these conditions may hold for small network sizes~$N$, but where one or more eigenvalues leaves the left half plane and causes instability when the network grows beyond some~$\bar{N}$. In these cases, we say the control algorithm lacks \emph{{scalable stability}}. 

\begin{definition}[{Scalable stability}]
A consensus control design is {\emph{scalably stable}} if the resulting closed-loop system achieves consensus over \emph{any} graph in the family~$\{\Gg_N\}$ of finite size $N$.  
%\emma{Removed $u$ from definition. I think the definition should be more general than only referring to~\eqref{eq:consensuscompact}. Otherwise, it becomes difficult to discuss eg absolute feedback.}
%
%\emma{It was suggested that \emph{scalable stability} (or maybe scale-stability) may be a better term. Do you also think that?}
\end{definition}

%BLABLA EDIT HERE AND NEXT SECTION ``on a system level, these principles imply that we impose the following assumptions on the consensus algorithm.''}
%

\subsection{Underlying assumptions: {modularity and locality}}
{The notion of scalable stability of a controller presumes a \emph{modular} design principle. This means that new agents are added to the network with the pre-designed controller gains, which are not re-tuned as the network grows. This means that the following important assumptions will underlie our analysis of the control $u$ in~\eqref{eq:consensuscompact}: }
%A number of straightforward, but important, assumptions on the system will underlie our upcoming analysis. 
%The control $u$ in~\eqref{eq:consensuscompact} is subject to the following assumptions:
%The following assumptions are imposed on the consensus control design $u$ in~\eqref{eq:consensuscompact}: 
%\begin{ass}[Locality]
%\label{ass:q}
%The feedback is \emph{localized}, meaning that the controller uses measurements only from a neighborhood of size at most $q$, where $q$ is fixed and independent of $N$. That is, 
%\vspace{-1.5mm}
%\begin{equation}
%\label{eq:qdef}
% |\mathcal{N}_i| \le q~~\forall i \in \mathcal{V}_N. \vspace{0.8mm}
%\end{equation} 
%%\[ |\mathcal{N}_i| \le q~~\forall i \in \mathcal{V}_N. \] 
%\end{ass}
\begin{ass}[Finite gains]
\label{ass:finite}
The controller gains are finite, that is, $a_k\le a_{\max} <\infty $ for all $k = 0,1,\ldots, n$.
\end{ass}
\begin{ass}[Fixed gains]
\label{ass:fixed}
The gains $a_k$ for all $k = 0,1,\ldots, n$ do not change if the underlying graph changes. That is, the gains are fixed with respect to the graph family~$\{\Ggn\}$. In particular, they are independent of~$N$.
%The weights $w_{ij}$, gains $a_k$ and the locality parameter~$q$ do not change if a node (with connecting edges) is added to the graph~$\mathcal{G}$. This means that they are independent of the total network size~$N$.
\end{ass}
%\vspace{1mm}

%The key property for our upcoming main result is that the algebraic connectivity of the graphs \{\Ggn\} decrease in~$N$. To limit the discussion to relevant graph families 
%For the network graph we will impose the following assumptions, unless otherwise stated:
When it comes to the network graph, our main result will rely on the property that the algebraic connectivity decreases in network size. When discussing families of graphs where this property holds, we will impose the following assumptions, unless otherwise stated:
%When it comes to the network graph, the property of a decreasing algebraic connectivity is key for our main result. 
%We will typically let the following assumptions underlie our discussion of classes of graphs for which this property holds. 
%The key property for our upcoming main result is that of a decreasing algebraic connectivity in the network graph. 
%When discussing classes of graphs for which this property holds, we impose the following assumptions.  
%Our analysis focuses on bounded-degree network graphs, meaning that the following assumptions are important:
\begin{ass}[Bounded neighborhoods]
\label{ass:q}
All nodes in the graph family~$\{\Ggn\}$ have a neighborhood of size at most~$q$, where~$q$ is fixed and independent of~$N$. That is, 
\vspace{-1.5mm}
\begin{equation}
\label{eq:qdef}
 |\mathcal{N}_i| \le q~~\forall i \in \mathcal{V}_N. \vspace{0.8mm}
\end{equation} 
%\[ |\mathcal{N}_i| \le q~~\forall i \in \mathcal{V}. \] 
\end{ass}
%\vspace{1mm}
\begin{ass}[Finite weights]
\label{ass:boundedweights}
The edge weights in~$\{\Ggn\}$  are finite, that is, $w_{ij}\le w_{\max} <\infty $ for all $(i,j)\in\Ee_N$, where $w_{\max}$ is fixed and independent of~$N$.
\end{ass}
%\vspace{1mm}
Assumptions~\ref{ass:q}--\ref{ass:boundedweights} imply that we consider networks with bounded nodal degrees. 
%In this paper, we discuss cases where one or more eigenvalues of the system matrix~$\mathcal{A}$ in~\eqref{eq:closedloop} end up in the open right half of the complex plane, leading to instability. 

\section{Scale fragility in high-order consensus}
\label{sec:mainresult}
This section is devoted to our first important result. We prove that the high-order consensus algorithm ($n\ge 3$) lacks {scalable stability} in graph families with what we term a {decreasing algebraic connectivity}. This applies to all graphs where connections are, in a sense, localized. 

%We review the property for several classes of graphs.  %We review this property both for special classes of graphs and in general. 
% several classes of graphs, and we end this section by listing a few of them. 

%We remark already at this point, however, that Assumptions~\ref{ass:finite}--\ref{ass:boundedweights} will be sufficient to show that high-order leader follower consensus lacks {scalable stability}. 

%In the following, the notion of an increase in the network size~$N$ should be understood as the addition of nodes to the network (along with connecting edges) in such a manner that Assumptions~\ref{ass:q}--\ref{ass:fixed} remain satisfied. 
%These assumptions are necessary for the key property; that the algebraic connectivity of $\mathcal{G}$ decreases towards zero. This is clarified through examples in~Section~\ref{sec:graphs}.
%These assumptions are necessary for the algebraic connectivity of the network graph to be decrasing in~$N$, the meaning of which will be clarified in~Section~\ref{sec:graphs}.
%The meaning of this notion in terms of our main result is clarified through examples in Section~\ref{sec:graphs}.
%These assumptions imply that nodes or edges can only be added to the network in 

\subsection{Main result}
%This section's main result is negative, and states that the high-order consensus algorithm will lack {scalable stability} in certain graph families.
This section's main result can be stated as follows. 
\begin{theorem}
\label{thm:main}
If $n\ge 3$, no control on the form~\eqref{eq:consensuscompact} subject to Assumptions~\ref{ass:finite}--\ref{ass:fixed}, is {scalably stable} in graph families where {the sequence} 
$\mathrm{Re}\{\lambda_2(\Ggn)\}\rightarrow 0$ as $N \rightarrow \infty$. 
%\emma{We could be more formal and define the sequence $\{\mathrm{Re}\{\lambda_2(\Ggn)\}\}_{N\rightarrow\infty}$ but I think this is sufficiently clear. OK? }
%
%if the graph $\mathcal{G}$ is such that $\mathrm{Re}\{\lambda_2\} \rightarrow 0$ as $N \rightarrow \infty$. %Here, $\lambda_2$ is the eigenvalue of the graph Laplacian~$L$ with the smallest non-zero real part. 
\end{theorem}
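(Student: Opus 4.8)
The plan is to reduce the stability of the $Nn\times Nn$ closed-loop matrix $\mathcal{A}$ in~\eqref{eq:closedloop} to the root locations of a family of scalar, complex-coefficient polynomials indexed by the Laplacian eigenvalues, and then to show that for $n\ge 3$ the polynomial attached to $\lambda_2$ is forced to have a root in the open right half plane once $\mathrm{Re}\{\lambda_2\}$ is small enough.

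First I would exploit the companion block structure of $\mathcal{A}$. Writing $L=UTU^{*}$ in Schur form with $T$ upper triangular and the eigenvalues $\lambda_l$ on its diagonal, the similarity $I_n\otimes U$ leaves every identity block of $\mathcal{A}$ unchanged and replaces each $L$ block by $T$. After a permutation the transformed matrix is block upper triangular with $n\times n$ companion blocks on the diagonal, one per eigenvalue $\lambda_l$, so that $\det(sI-\mathcal{A})=\prod_{l=1}^N p_{\lambda_l}(s)$ with $p_\lambda(s)=s^n+\lambda\sum_{k=0}^{n-1}a_k s^k$. The factor for $\lambda_1=0$ supplies the $n$ admissible zero eigenvalues, so by the consensus condition (Lemma~\ref{thm:consensus}, extended to $n>3$ as noted in the text) scalable stability requires $p_{\lambda_l}$ to be Hurwitz for every nonzero $\lambda_l$, and in particular for $\lambda=\lambda_2$.

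The core step is a root-location estimate for $p_\lambda$ as $\lambda$ approaches the imaginary axis. As $\lambda\to 0$ all $n$ roots of $p_\lambda$ collapse to the origin on the single Newton-polygon scale $|\lambda|^{1/n}$, and to leading order they solve $s^n=-a_0\lambda$; hence they lie near the $n$ equally spaced points $(a_0|\lambda|)^{1/n}\exp\!\big(i(\pi+\arg\lambda+2\pi k)/n\big)$. Since a nonzero Laplacian eigenvalue satisfies $\arg\lambda\in(-\pi/2,\pi/2)$, the $k=0$ point has argument in $(\pi/2n,\,3\pi/2n)$, which for $n\ge 3$ lies strictly inside $(-\pi/2,\pi/2)$, pushing one root into the open right half plane. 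The angular spacing $2\pi/n$ being smaller than the width $\pi$ of the right half plane is precisely the obstruction that appears only for $n\ge 3$ and is absent for $n\le 2$. To convert this asymptotic picture into a hard threshold I would apply the Routh--Hurwitz criterion for complex-coefficient polynomials to $p_\lambda$, extracting a bound $\bar\lambda>0$, determined solely by the gains $a_0,\dots,a_{n-1}$, such that $\mathrm{Re}\{\lambda\}<\bar\lambda$ implies $p_\lambda$ is not Hurwitz.

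Finally, Assumption~\ref{ass:fixed} keeps the gains fixed and finite as $N$ grows, so $\bar\lambda$ is a single constant independent of $N$. Because $\mathrm{Re}\{\lambda_2(\mathcal{G}_N)\}\to 0$, there is a finite $\bar N$ beyond which $\mathrm{Re}\{\lambda_2\}<\bar\lambda$, so $p_{\lambda_2}$ acquires a root with positive real part, $\mathcal{A}$ loses stability, and consensus fails; the design is therefore not scalably stable. I expect the main obstacle to be the complex-coefficient case: for a general directed graph $\lambda_2$ is complex, so the elementary ``all coefficients positive'' test does not apply and $\bar\lambda$ must come from the full complex Routh--Hurwitz array, while one must simultaneously guarantee that it depends only on the bounded gains and not on $\mathrm{Im}\{\lambda_2\}$ or on $N$. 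The asymptotic root count above fixes the sign of the effect but does not by itself deliver a uniform, $N$-independent threshold, which is where the bulk of the technical work lies.
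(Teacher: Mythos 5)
Your overall route coincides with the paper's: a similarity transformation reducing $\det(sI-\mathcal{A})$ to the product of the complex-coefficient polynomials $p_{\lambda_l}(s)=s^n+\lambda_l\sum_{k=0}^{n-1}a_k s^k$ (the paper uses a Jordan form of $L$ where you use a Schur form; both work), followed by the Routh--Hurwitz criteria for complex coefficients. The Newton-polygon picture is appealing intuition for why $n\ge 3$ is the breaking point. However, there is a genuine gap, and you have located it yourself: the step you defer as ``the bulk of the technical work'' is in fact the entire content of the proof. Two specific problems. First, your asymptotic root-location argument requires $|\lambda_2|\to 0$, whereas the theorem only assumes $\mathrm{Re}\{\lambda_2(\Ggn)\}\to 0$; the imaginary part may remain bounded away from zero (Theorem~\ref{thm:main} does not even impose Assumptions~\ref{ass:q}--\ref{ass:boundedweights}, so $|\lambda_2|$ need not be small at all), in which case the roots of $p_{\lambda_2}$ do not collapse to the origin and the leading-order equation $s^n=-a_0\lambda$ says nothing. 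Second, the existence of a destabilizing threshold $\bar\lambda$ depending only on the gains is asserted rather than established.

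What closes the gap is the explicit second complex Routh--Hurwitz inequality, which after manipulation takes the form~\eqref{eq:RHcriterion}:
\begin{multline*}
a_{n-1}\bigl(\mathrm{Re}\{\lambda_l\}\bigr)^2\bigl(a_{n-1}a_{n-2}\,\mathrm{Re}\{\lambda_l\}-a_{n-3}\bigr)\\
+a_{n-2}\bigl(\mathrm{Im}\{\lambda_l\}\bigr)^2\bigl(a_{n-1}^2\,\mathrm{Re}\{\lambda_l\}-a_{n-2}\bigr)>0.
\end{multline*}
Once $\mathrm{Re}\{\lambda_l\}<\min\{a_{n-3}/(a_{n-1}a_{n-2}),\,a_{n-2}/a_{n-1}^2\}$, both brackets are negative, the first term is strictly negative and the second nonpositive, so the inequality fails for \emph{every} value of $\mathrm{Im}\{\lambda_l\}$. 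That is exactly the gain-only, $N$- and $\mathrm{Im}$-independent threshold your proposal needs but does not derive. Note also that this is where $n\ge 3$ actually enters the rigorous argument: for $n=2$ the coefficient $a_{n-3}$ is absent, the structure of the condition changes, and one instead obtains the angular condition of Theorem~\ref{thm:2ndorder}. With this inequality supplied, the rest of your argument (fixed gains under Assumption~\ref{ass:fixed}, hence a finite $\bar N$ beyond which $p_{\lambda_2}$ is not Hurwitz) goes through as in the paper.
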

%\begin{rem}
%To make the proof below easier to follow, it assumes diagonalizability of the graph Laplacian $L$ of $\Gg_N$ to be diagonalizable. The general case is treated in the Appendix.
%\end{rem}
%\emma{New proof compared to ACC paper, using Jordan blocks instead of assuming diagonalizability of $L$. I've broken out R-H criterion to appendix.}

\begin{proof} %\textit{To make this proof easy to follow, we assume here that the graph Laplacian is diagonalizable. See Appendix~A for the general case. }
%Consider the closed-loop system~\eqref{eq:closedloop} defined for some underlying graph $$
The first step of the proof is a (generalized) block-diagonalization of the system matrix~$\mathcal{A}$. Let $T$ be an invertible $N\times N$ matrix such that $\Lambda = T^{-1}LT$ is on Jordan normal form. That is, $\Lambda = \mathrm{diag}\{\Lambda_1, \cdots, \Lambda_k\}$, where $\Lambda_l$, $l = 1,\ldots, k$ are $r_l\times r_l$ Jordan blocks, in which the Laplacian eigenvalue $\lambda_l$ is repeated along the main diagonal and ones appear on the superdiagonal (see~\cite[Chapter 3]{HornJohnson} for details). The number~$k$ of Jordan blocks is the number of linearly independent eigenvectors of $L$, which may be less than or equal to its number of distinct eigenvalues. If the graph~$\Gg_N$ is undirected, then $L$ is symmetric and thus diagonalizable. In this case, $r_i = 1$ for $i = 1,\ldots,k = N$. Otherwise, we only impose that the eigenvalue $\lambda_1 = 0$ is simple, which is equivalent to the graph having a connected spanning tree.
%, and we number the (not necessarily unique) eigenvalues of $L$ according to $0 = \lambda_1 < \rel \le \mathrm{Re}\{\lambda_k\} $. Recall that we have assumed that $\lambda_1$ is simple 
 %The number $r_i$ is the algebraic multiplicity of the eigenvalue $\lambda_i$. 
%If the graph~$\Gg_N$ is undirected, then $L$ is symmetric and $r_i = 1$ for $i = 1,\ldots,k = N$, so $\Lambda$ is diagonal. Otherwise, we only impose $r_1 = 1$, i.e., that the eigenvalue $\lambda_1 = 0$ is simple, which is equivalent to the graph having a connected spanning tree. %, and assume we have $N_\Lambda$ distinct eigenvalues of $L$. 
%
%The first step of the proof is to block-diagonalize the system matrix~$\mathcal{A}$. Let $T$ be the invertible matrix that diagonalizes the graph Laplacian~$L$, so that $T^{-1}LT = \Lambda = \mathrm{diag}\{0, \lambda_2,\ldots,\lambda_N\}$.  
By pre- and post-multiplying $\mathcal{A}$ by the $(Nn \times Nn)$ matrix $\mathbf{T} = \mathrm{diag} \{T,T,\ldots,T\}$, we get 
\begin{equation}
\label{eq:diagonalizing}
\mathbf{T}^{-1}\mathcal{A}\mathbf{T} = \underbrace{\begin{bmatrix}
0 & I_{N} & 0 & \cdots & 0\\
0 & 0 & I_{N} & \cdots &\vdots \\
 & \vdots &  & \ddots &\vdots \\
0 & 0 & 0 & \cdots &I_{N} \\
-a_0\Lambda & -a_1 \Lambda & -a_2\Lambda & \cdots & -a_{\mathrm{n-1}}\Lambda
\end{bmatrix}}_{\hat{\mathcal{A}}}.
\end{equation}
This can be re-arranged as $k$ decoupled sub-matrices~$\hat{\mathcal{A}}_l$:
\[ \hat{\mathcal{A}}_l = {\begin{bmatrix}
0 & I_{r_l} & 0 & \cdots & 0\\
0 & 0 & I_{r_l} & \cdots &\vdots \\
 & \vdots &  & \ddots &\vdots \\
0 & 0 & 0 & \cdots &I_{r_l} \\
-a_0\Lambda_l & -a_1 \Lambda_1 & -a_2\Lambda_l & \cdots & -a_{\mathrm{n-1}}\Lambda_l
\end{bmatrix}}\]
%\[ \frac{d}{dt} \hat{\xi} = \underbrace{\begin{bmatrix}
%0 & I & 0 & \cdots & 0\\
%0 & 0 & I & \cdots &\vdots \\
%0 & 0 & 0 & \ddots &\vdots \\
%0 & 0 & 0 & \cdots &I \\
%-a_0\Lambda & -a_1 \Lambda & -a_2\Lambda & \cdots & -a_{\mathrm{n-1}}\Lambda
%\end{bmatrix}}_{\hat{\mathcal{A}}} \xi. \]
%
%This corresponds to $N$ decoupled subsystems:
%\[ \frac{d}{dt} \hat{\xi}_l = \underbrace{\begin{bmatrix}
%0 & 1 & 0 & \cdots & 0\\
%0 & 0 & 1 & \cdots &\vdots \\
%0 & 0 & 0 & \ddots &\vdots \\
%0 & 0 & 0 & \cdots &1 \\
%-a_0\lambda_l & -a_1 \lambda_1 & -a_2\lambda_l & \cdots & -a_{\mathrm{n-1}}\lambda_l
%\end{bmatrix}}_{\hat{\mathcal{A}}_l} \xi_l, \]
 for $l = 1,\ldots,k$. 
 The eigenvalues of~$\mathcal{A}$ are the union of the eigenvalues of all $\hat{\mathcal{A}}_l$. Clearly, the~$n$ zero eigenvalues of $\Aa$ are obtained from $\hat{\mathcal{A}}_1$ since $\Lambda_1 = \lambda_1 = 0$. Therefore, to ensure {scalable stability}, we must require all eigenvalues of all $\hat{\mathcal{A}}_l$, $l = 2,\ldots,k$ to have negative real parts for any~$N$. 
%For admissibility, we must thus require all eigenvalues of all $\hat{\mathcal{A}}_l$, $l = 2,\ldots,N$ to have negative real parts for any~$N$.   

The characteristic polynomial of each~$\hat{\mathcal{A}}_l$ is
\begin{equation}
\label{eq:charpoly}
P_l(s) = (\underbrace{s^n + a_{n-1} \lambda_l s^{n-1} + \ldots + {a}_1\lambda_ls + a_0\lambda_l}_{p_l(s)})^{r_l},
\end{equation}
whose roots are given by the roots of $p_l(s)$. 
In general, the eigenvalue $\lambda_l$ appearing in $p_l(s)$ is complex-valued. We therefore apply the Routh-Hurwitz criteria for polynomials with complex coefficients. As these criteria do not appear frequently in literature, we re-state {them in~\ref{RHapp}.} 

The first Routh-Hurwitz criterion applied to $p_l(s)$ reads %The condition \eqref{eq:firstcond} now reads 
\begin{equation}
\label{eq:trivialcrit}
a_{n-1}\mathrm{Re}\{\lambda_l\}>0.
\end{equation}
%$a_{n-1}\mathrm{Re}\{\lambda_l\}>0$, 
Since $\mathrm{Re}\{\lambda_l\}>0$ for $l = 2,\ldots,k$ this is always satisfied when $a_{n-1}>0$. The second criterion, {given in~\eqref{eq:secondcond},} can after some manipulation be written as 
\begin{multline}
\label{eq:RHcriterion}
a_{n-1}(\rell)^2(a_{n-1}a_{n-2}\rell - a_{n-3})  \\ + a_{n-2}(\imll)^2(a_{n-1}^2\rell - a_{n-2}) >0,
\end{multline} 
which must hold for all $l = 2,\ldots,k$.
While the factors in front of the brackets remain positive for all~$\lambda_l$ (recall, $a_k>0$), the brackets themselves are negative if $\rell$ is sufficiently small. In particular, the condition~\eqref{eq:RHcriterion} is violated if $\rel = \min_l \rell$ is sufficiently small. 

This means that if the criterion~\eqref{eq:RHcriterion} is evaluated for a graph family~$\{\Gg_N\}$ in which $\mathrm{Re}\{\lambda_2(\Ggn)\}\rightarrow 0$ as $N\rightarrow \infty$, it will eventually ({for a sufficiently large, but finite, $N$}) be violated. 

We can conclude that at least one root of the characteristic polynomial~$p_2(s)$ will have a nonnegative real part {for sufficiently large~$N$}. Lemma~\ref{thm:consensus} is then not satisfied and the control is not {scalably stable} for~${n\ge3}$.
%\emma{No more reference to $\bar{N}$ here in the proof, since it's not the same $\bar{N}$ as the critical $\bar{N}$ used in rest of paper. Hope this works.} 
%
%This means that if the criterion~\eqref{eq:RHcriterion} is evaluated for a graph family $\{\Gg_N\}$ in which $\mathrm{Re}\{\lambda_2(\Ggn)\}\rightarrow 0$ as $N\rightarrow \infty$, it will eventually (that is, for $N\ge \bar{N}$ for some finite~$\bar{N}$) be violated. 
%
%We can conclude that at least one root of the characteristic polynomial~$p_2(s)$ will have a nonnegative real part for~${N\ge \bar{N}}$. Lemma~\ref{thm:consensus} is then not satisfied and the control is not {scalably stable} for $n\ge3$. 
\end{proof}
\begin{rem}
If the graph is undirected, then the polynomial~$\eqref{eq:charpoly}$ has real-valued coefficients. The result can then be derived using the standard Routh-Hurwitz criteria. This gives the simpler condition 
\begin{equation}
\label{eq:realRH}
a_{n-1}a_{n-2}\lambda_2(\Gg_N) - a_{n-3} >0,
\end{equation}
%$a_{n-1}a_{n-2}\lambda_2(\Gg_N) - a_{n-3} >0$, 
which cannot remain satisfied if $\{\lambda_2(\Ggn)\} \rightarrow 0$ as $N\rightarrow \infty$. 
\end{rem}
%\begin{rem}
%The condition that $L$ be normal can be relaxed if $L$ is diagonalizable as in~\eqref{eq:diagonalizing} by some (non-unitary) matrix. The remainder of the proof would hold true. 
%\end{rem}
%\vspace{0.8mm}

Theorem~\ref{thm:main} implies that high-order consensus does not scale in certain graph families. 
Instability will occur at the smallest size~$N$ for which the Routh-Hurwitz criteria are violated, and at least one eigenvalue crosses to the right half plane. We will denote this critical network size~$\bar{N}$.  
In Figure~\ref{fig:criticalN} we display~$\bar{N}$ for $n = 3,4,5$ in an unweighted path graph.

%\vspace{0.8mm}
\subsubsection{High-order leader-follower consensus}
Leader-follower consensus~\eqref{eq:closedloopred} in undirected graphs lacks {scalable stability} under a weaker condition, namely, under bounded nodal degrees. 
%on the form~\eqref{eq:closedloopred} in undirected networks will always be inadmissible under the given assumptions. 
This was also observed in~\cite{Swaroop2006}. 

We first require the following Lemma:
\begin{lemma}
\label{lem:leaderfollowerlemma}
Consider the grounded Laplacian matrix~$\bar{L}$ of an undirected graph~$\Ggn$ and let Assumptions~\ref{ass:q}--\ref{ass:boundedweights} hold. The smallest eigenvalue~$\bar{\lambda}_1$ of~$\bar{L}$ then satisfies
\begin{equation}
\label{eq:lambda1cond}
\bar{\lambda}_1(\Ggn) \le \frac{q}{N-1}w_{\max}.
\end{equation}
\end{lemma}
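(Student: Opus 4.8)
The plan is to bound the smallest eigenvalue of the symmetric positive definite matrix $\bar{L}$ from above via a Rayleigh quotient, using a single well-chosen test vector. Since $\Ggn$ is undirected, $L$ is symmetric, and because there is a path from the leader to every other node, the grounded Laplacian $\bar{L}$ is symmetric positive definite (all eigenvalues strictly positive, as noted after~\eqref{eq:closedloopred}). By the Courant--Fischer characterization,
\[
\bar{\lambda}_1 = \min_{v \neq 0} \frac{v^T \bar{L}\, v}{v^T v},
\]
so evaluating the Rayleigh quotient at any fixed nonzero $v$ immediately yields an upper bound on $\bar{\lambda}_1$.

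The key step is to take the all-ones test vector $v = \mathbf{1}_{N-1} \in \mathbb{R}^{N-1}$ and to identify $\bar{L}\,\mathbf{1}_{N-1}$ explicitly. I would exploit the defining property $L\mathbf{1}_N = 0$: every row of $L$ sums to zero, so for each follower node $i\ge 2$ the entries in columns $2,\ldots,N$ of row~$i$ sum to $-L_{i1} = w_{i1}$. Forming $\bar{L}$ by deleting row and column~$1$ therefore leaves the row sums equal to $w_{i1}$, i.e., the weight of the edge joining follower~$i$ to the leader. Consequently $\mathbf{1}^T \bar{L}\,\mathbf{1} = \sum_{i=2}^N w_{i1}$, which is precisely the weighted degree $d_1$ of the leader node.

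Combining these observations gives
\[
\bar{\lambda}_1 \le \frac{\mathbf{1}^T \bar{L}\,\mathbf{1}}{\mathbf{1}^T \mathbf{1}} = \frac{d_1}{N-1},
\]
and it then remains only to bound the leader's degree. By Assumption~\ref{ass:q} the leader has at most $q$ neighbors, and by Assumption~\ref{ass:boundedweights} each incident weight is at most $w_{\max}$, so $d_1 = \sum_{j \in \mathcal{N}_1} w_{1j} \le q\, w_{\max}$. Substituting this into the previous estimate yields the claimed bound $\bar{\lambda}_1 \le \frac{q}{N-1} w_{\max}$.

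I do not anticipate a serious obstacle, since the argument reduces to a one-line Rayleigh quotient estimate once the test vector is fixed. The only point requiring genuine care is the bookkeeping in computing $\bar{L}\,\mathbf{1}_{N-1}$: one must track carefully that deleting the leader's column converts the zero row sums of $L$ into row sums equal to exactly the leader-incident weights, so that the quadratic form picks out the leader's degree and not some other quantity. A minor secondary check is that $\mathbf{1}_{N-1}$ is admissible in the Rayleigh characterization (it is nonzero), so the bound applies directly.
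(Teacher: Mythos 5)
Your proposal is correct and follows essentially the same route as the paper's own proof: a Rayleigh--Ritz upper bound with the all-ones test vector $\mathbf{1}_{N-1}$, the observation that the row sums of $\bar{L}$ equal the leader-incident edge weights, and the bound $\sum_{k\in\mathcal{N}_1}w_{1k}\le q\,w_{\max}$ from Assumptions A2--A3. No gaps.
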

%\vspace{1mm}
\begin{proof}
By the Rayleigh-Ritz theorem~\cite[Theorem 4.2.2]{HornJohnson} it holds
\begin{equation*}
\bar{\lambda}_1 \le  \frac{v^*\bar{L}v}{v^*v} ,~~ \forall v \in \mathbb{C}^{N-1} \backslash \{0\}.
\end{equation*}
This implies in particular that %$v = \mathbf{1}_{N-1}$. 
\[ \bar{\lambda}_1 \le  \frac{\mathbf{1}_{N-1}^T\bar{L}\mathbf{1}_{N-1}}{\mathbf{1}_{N-1}^T\mathbf{1}_{N-1}} = \frac{\sum_{k \in \mathcal{N}_1} w_{1k}}{N-1} \le \frac{q w_{\max}}{N-1},\]
where $\mathbf{1}_{N-1}^T\bar{L}\mathbf{1}_{N-1} = \sum_{k \in \mathcal{N}_1} w_{1k}$ is the weight sum of all edges leading to the leader node 1.  The equality holds since each row~$k$ of the grounded Laplacian~$\bar{L}$ sums to zero if the corresponding node~$k$ has no connection to the leader, and otherwise to~$w_{1k}\le w_{\max}$.
\end{proof}
%\vspace{0.8mm}
Clearly, $\bar{\lambda}_1(\Ggn) \rightarrow 0$ as $N\rightarrow \infty$. The next theorem therefore follows. 
\begin{theorem}
\label{thm:leaderfollower}
 If $n\ge 3$, no leader-follower consensus algorithm on the form~\eqref{eq:closedloopred} is {scalably stable} in undirected graph families~$\{\Ggn\}$ under Assumptions~\ref{ass:finite}--\ref{ass:boundedweights}.
\end{theorem}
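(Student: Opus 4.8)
The plan is to reduce Theorem~\ref{thm:leaderfollower} to the already-established machinery of Theorem~\ref{thm:main}, using Lemma~\ref{lem:leaderfollowerlemma} to supply the crucial "decreasing eigenvalue" property. The key structural observation is that the leader-follower closed-loop matrix $\bar{\mathcal{A}}$ in~\eqref{eq:closedloopred} has exactly the same block-companion form as $\mathcal{A}$ in~\eqref{eq:closedloop}, but with the full Laplacian $L$ replaced everywhere by the grounded Laplacian $\bar{L}$. Since the graph is undirected, $\bar{L}$ is symmetric and hence diagonalizable, so I can apply the identical block-diagonalization step: choose an orthogonal $T$ with $T^{-1}\bar{L}T$ diagonal, conjugate $\bar{\mathcal{A}}$ by $\mathbf{T}=\mathrm{diag}\{T,\ldots,T\}$, and permute to decouple into blocks $\hat{\mathcal{A}}_l$ whose characteristic polynomials are exactly $p_l(s) = s^n + a_{n-1}\bar{\lambda}_l s^{n-1} + \cdots + a_1\bar{\lambda}_l s + a_0\bar{\lambda}_l$, now indexed by the eigenvalues $\bar{\lambda}_l$ of $\bar{L}$.

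The next step is to invoke the Routh-Hurwitz analysis already carried out. Because $\bar{L}$ is symmetric, all its eigenvalues $\bar{\lambda}_l$ are real and (since $\bar{L}$ is a grounded Laplacian of a connected graph) strictly positive, so the coefficients of each $p_l(s)$ are real. I would therefore use the simpler real-coefficient condition~\eqref{eq:realRH}, which here reads $a_{n-1}a_{n-2}\bar{\lambda}_1(\Ggn) - a_{n-3} > 0$ evaluated at the smallest eigenvalue $\bar{\lambda}_1$. The point of Lemma~\ref{lem:leaderfollowerlemma} is precisely that $\bar{\lambda}_1(\Ggn) \le \frac{q}{N-1}w_{\max} \to 0$ as $N\to\infty$ under Assumptions~\ref{ass:q}--\ref{ass:boundedweights}. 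Hence for $N$ large enough the left-hand side becomes negative and the criterion is violated, forcing a root of $p_1(s)$ into the closed right half plane.

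Finally, I would argue that such a right-half-plane root makes $\bar{\mathcal{A}}$ unstable and hence precludes consensus: unlike the leaderless case, $\bar{L}$ has \emph{no} zero eigenvalue (all eigenvalues in the RHP, as noted after~\eqref{eq:closedloopred}), so there is no benign $n$-dimensional null space to absorb the crossing eigenvalue, and any eigenvalue of $\bar{\mathcal{A}}$ with nonnegative real part is a genuine instability. Since the bound in Lemma~\ref{lem:leaderfollowerlemma} requires only bounded degree rather than decaying algebraic connectivity, the conclusion holds for \emph{every} undirected graph family satisfying the assumptions, which is the claimed strengthening.

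The main obstacle I anticipate is not the eigenvalue decay (that is handed to us by Lemma~\ref{lem:leaderfollowerlemma}) but a subtlety in transferring the violated Routh-Hurwitz criterion into a definitive statement of instability rather than mere inconclusiveness. One must check that when the second criterion fails it corresponds to an actual sign change producing a RHP root, not just a loss of a sufficient condition; this is why appealing to the real-coefficient reduction~\eqref{eq:realRH} is cleaner, since for real polynomials a negative Hurwitz determinant genuinely certifies a RHP root. A secondary point worth a sentence is confirming that the permutation/decoupling argument is valid verbatim for $\bar{\mathcal{A}}$ despite $\bar{L}$ being $(N-1)\times(N-1)$ rather than $N\times N$; this is routine since the block-companion structure is identical.
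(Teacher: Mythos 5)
Your proposal is correct and follows essentially the same route as the paper's proof: block-diagonalize $\bar{\mathcal{A}}$ via the grounded Laplacian $\bar{L}$, apply the real-coefficient Routh--Hurwitz condition~\eqref{eq:realRH} to the resulting $N-1$ polynomials, and use Lemma~\ref{lem:leaderfollowerlemma} to force a violation of that condition for sufficiently large $N$. The additional care you take regarding the absence of a zero eigenvalue of $\bar{L}$ and the fact that a failed Hurwitz determinant for a real polynomial genuinely certifies a root with nonnegative real part is sound, but it does not constitute a departure from the paper's argument.
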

\begin{proof}
The arguments in the  proof of Theorem~\ref{thm:main} apply. In this case,  
%The proof follows the steps in the proof of Theorem~\ref{thm:main}, and 
$N-1$ real-valued characteristic polynomials $p_l(s)$ as in~\eqref{eq:charpoly} are obtained. 
We can use the condition~\eqref{eq:realRH}, which in this case reads
%\begin{equation}
%\label{eq:reducedcond}
$a_{n-1}a_{n-2}\bar{\lambda}_l - a_{n-3} >0$ 
%\end{equation}
for $l = 1,\ldots,N-1$. 
By Lemma~\ref{lem:leaderfollowerlemma}, that requires
\begin{equation}
\label{eq:reducedcond}
a_{n-1}a_{n-2} > \frac{1}{q w_{\max}}a_{n-3}(N-1),
\end{equation}
%$a_{n-1}a_{n-2} > \frac{1}{q w_{\max}}a_{n-3}(N-1)$; 
which will be violated for sufficiently large $N$, preventing {scalable stability}. % which cannot hold for all $N$ as long as the gains are fixed (Assumption~\ref{ass:fixed}). 
%cannot be satisfied for any  with fixed controller gains . 
%
%
%
%The arguments in the proof of Theorem~\ref{thm:main} apply, with the exception of there being only $N-1$ subsystems, and all $\bar{\lambda}_l \neq 0$. The condition~\eqref{eq:RHcriterion} will not be satisfied if $\mathrm{Re}\{\bar{\lambda}_2\} \rightarrow 0$ as $N \rightarrow \infty$.
%
%\todo{Show $\mathrm{Re}\{\bar{\lambda}_2\} \rightarrow 0$ by doing the proof from before but on the symmetric part of $\bar{L}$. Remains to prove that $\bar{L}$ normal if $L$ normal.  }
%\emma{Did not succeed in this. Let's consider only undirected graphs?}
\end{proof}
\begin{rem}
Assumption~\ref{ass:q} of bounded neighborhoods can be relaxed. As seen from~\eqref{eq:reducedcond}, Theorem~\ref{thm:leaderfollower} 
holds if $q/N\rightarrow 0$ as $N\rightarrow \infty$. That is, if nodal neighborhoods have sublinear growth in~$N$. 
%also holds if $q/N\rightarrow 0$ as $N\rightarrow \infty$. That is, as long as nodal neighborhoods \textit{only grow sub-linearly in N}. %  This is easily seen from the condition~\eqref{eq:reducedcond}.  
%In leaderless consensus, {scalable stability} can be achieved with a slower growth rate, as we will see in Section~\ref{sec:mitigation}.
\end{rem}

\begin{figure}
\centering
\begin{tikzpicture}
	%\pgfplotsset{every tick label/.append style={font=\Large}}
	\begin{axis}
	[xlabel={Neighborhood size $q$ },
	ylabel={Critical network size $\bar{N}$ },
	ylabel near ticks,
	xlabel near ticks,
	xmin=1,
	xmax=40,
	ymin=0,
	ymax=500,
	yticklabel style={
        /pgf/number format/fixed,
       % pgf/number format/fixed zerofill,
        /pgf/number format/precision=5
},
%	yticklabel style={/pgf/number format/.cd,
%		fixed,
%		fixed zerofill,
%		precision=2},
	grid=major,
		height=4.420cm,
	width=0.98\columnwidth,
	legend cell align=left,
	legend style={at={(0,1)},anchor= north west, fill = white}, font = \small,
	legend entries={$n = 3 $\\ $n = 4$ \\  $n = 5$\\}, cycle list name=mycolorlist3,
	]
			\foreach \x in {1,2,3}{
	\addplot +[mark=star, thick, only marks, ] table[x index=0,y index=\x,col sep=space]{consensusdata2.txt};
	},
	\foreach \x in {1,2,3}{
	\addplot +[thick] table[x index=0,y index=\x,col sep=space]{scaledata3.txt};
	}
	\end{axis}
	\end{tikzpicture}	
\caption{Critical network size~$\bar{N}$ at which the stability conditions are violated for an \nth order consensus algorithm. The graph is an undirected path graph where each node is connected to its $q$ nearest neighbors. Increasing the neighborhood size~$q$ here increases~$\bar{N}$ faster than linearly. Also note that for higher~$n$, the stability conditions are violated at smaller~$\bar{N}$. 
%in an undirected path graph where each node is connected to its $q$ nearest neightbors. The critical network size is smaller the higher the system's order $n$. Here, all edge weights are set to 1 and $a_0 = 0.1$, $a_1 = 0.8$, $a_2,a_3,a_4 = 1$. 
 }
\label{fig:criticalN}
\end{figure}
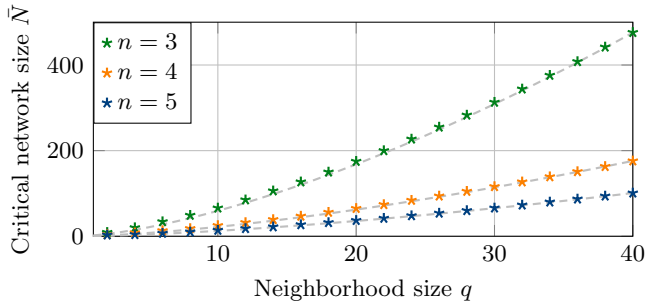

\subsection{Affected classes of graphs}
\label{sec:graphs}
We proved that high-order consensus lacks {scalable stability} in any network where the underlying graph family is such that $\mathrm{Re}\{\lambda_2(\Ggn)\}$ is decreasing towards zero as~$N$ increases. For undirected graphs, the smallest non-zero Laplacian eigenvalue~$\lambda_2$ is real-valued and known as the \emph{algebraic connectivity} of the graph. For directed graphs, the notion of algebraic connectivity is not clear-cut, see e.g.~\cite{Chung2005}. We can, however, make the following statement:
%While the correct generalization of algebraic connectivity to directed graphs is not clear-cut, we know the following:
%The inadmissibility of high-order consensus applies to any network whose underlying graph is such that $\rel$ is decreasing towards zero as~$N$ increases. The second-smallest Laplacian eigenvalue $\lambda_2$ of an undirected graph is real-valued and known as the \emph{algebraic connectivity} of the graph. While the correct generalization of algebraic connectivity to directed graphs is not clear-cut, we know the following:
\begin{lemma}
\label{lem:mirrorlemma}
If~$L$ is {normal}, then
$\rel = \lambda_2^s,$
where $\lambda_2^s$ is the smallest non-zero eigenvalue of $L^s = {(L + L^T)/2}$, that is, the symmetric part of~$L$. 
\end{lemma}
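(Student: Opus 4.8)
The plan is to exploit the defining property of a normal matrix—unitary diagonalizability—to show that the spectrum of the symmetric part $L^s$ is exactly the set of real parts of the eigenvalues of $L$. Since $L$ is real, its transpose coincides with its conjugate transpose, so $L^s = (L+L^T)/2 = (L+L^*)/2$ is the Hermitian part of $L$; being real symmetric, it has real eigenvalues, which is what makes the comparison with $\rel$ meaningful.

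First I would invoke the spectral theorem for normal matrices: because $L^TL = LL^T$, there exists a unitary $U$ with $U^* L U = \Lambda$, where $\Lambda = \mathrm{diag}\{\lambda_1,\ldots,\lambda_N\}$ collects the (generally complex) eigenvalues of $L$. Taking conjugate transposes of this identity shows that the \emph{same} $U$ diagonalizes $L^*$, namely $U^* L^* U = \Lambda^*$. This simultaneous diagonalization is precisely the place where normality is used: for a general $L$, the transpose need not be diagonalized by the eigenbasis of $L$, and the conclusion of the lemma would fail.

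Combining the two identities gives $U^* L^s U = \tfrac{1}{2}(\Lambda + \Lambda^*) = \mathrm{diag}\{\mathrm{Re}\{\lambda_1\},\ldots,\mathrm{Re}\{\lambda_N\}\}$, so the eigenvalues of $L^s$ are exactly the numbers $\rell$, $l = 1,\ldots,N$. It then remains to match the smallest \emph{nonzero} eigenvalue of $L^s$ with $\rel$. Under the standing assumption that $\Ggn$ has a connected spanning tree, $\lambda_1 = 0$ is a simple eigenvalue of $L$ and all remaining eigenvalues lie in the open right half-plane, so $\rell > 0$ for $l \ge 2$. Hence the only vanishing entry among the $\rell$ comes from $\lambda_1$, and ordering by magnitude yields $\lambda_2^s = \min_{l\ge 2}\rell = \rel$, as claimed.

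The argument is essentially routine once normality has been brought to bear; the single point requiring care is this last identification. The potential obstacle is ensuring that no eigenvalue $\lambda_l$ with $l\ge 2$ has vanishing real part, which would otherwise reintroduce $0$ as a ``nonzero-index'' eigenvalue of $L^s$ and break the correspondence between $\lambda_2^s$ and $\rel$. This is exactly excluded by the connected-spanning-tree hypothesis (simplicity of the zero eigenvalue of $L$), which is in force throughout.
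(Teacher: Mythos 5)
Your proof is correct and rests on the same key fact as the paper's: normality of $L$ lets $L^T=L^*$ be (simultaneously) diagonalized by the eigenvectors of $L$, so $L^s$ has the real parts $\rell$ as its eigenvalues. The paper argues this locally with a single eigenvector of $\lambda_2$, whereas you diagonalize globally and then carefully use the simple-zero-eigenvalue assumption to identify the smallest nonzero eigenvalue of $L^s$ with $\rel$ --- the same route, executed slightly more completely.
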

\begin{proof}
With $v$ an eigenvector, $Lv = \lambda_2v$, and since $L$ is normal $L^Tv = \lambda_2^*v$, where $^*$ denotes complex conjugate. Then, $\frac{1}{2}(L \!+\!L^T)v = \frac{1}{2}(\lambda_2\! +\! \lambda_2^*)v = \frac{1}{2}( 2\rel) v$.
%See Lemma~9.1.2 in \cite{GuoThesis}.
\end{proof}
For any balanced graph, the matrix~$L^s$ is the graph Laplacian corresponding to the \emph{mirror graph} $\hat{\mathcal{G}}_N$ of $\mathcal{G}_N$. {The mirror graph (of any directed graph)} is the {undirected} graph obtained as $\hat{\mathcal{G}}_N = \{\mathcal{V}_N, \mathcal{E}_N\cup \hat{\mathcal{E}}_N\}$, where $\hat{\mathcal{E}}_N$ is the set of all edges in $\mathcal{E}_N$, but \emph{reversed}, and whose edge weights are $\hat{w}_{ij} = \hat{w}_{ji} = (w_{ij}\!+\!w_{ji})/2$~\cite{OlfatiSaber2004}. Clearly, the mirror graph of an undirected graph is the graph itself. 
Lemma~\ref{lem:mirrorlemma} implies that when $L$ is normal, $\mathrm{Re}\{\lambda_2(\Ggn)\}$ is obtained as the algebraic connectivity of the mirror graph~$\mathcal{\hat{G}}_N$. 

We conclude that the result in Theorem~\ref{thm:main} will apply to graph families whose Laplacians are normal and where the corresponding mirror graph family has a decreasing algebraic connectivity, that is, where $\{\lambda_2(\hat{\Gg}_N)\}\rightarrow 0$ as $N\rightarrow \infty$.
%We can conclude that our main result in Theorem~\ref{thm:main} applies to graph families with normal graph Laplacians, for which the family of mirror graphs has a decreasing algebraic connectivity, that is, where $\lambda_2(\hat{\Gg}_N)\rightarrow 0$ as $N\rightarrow \infty$. 
It is therefore meaningful to identify this property in {undirected} graphs. 
The remainder of this section gives an account of undirected graph families that have this property. We first survey particular classes of graphs and then state a general condition. 

\begin{rem}
\label{rem:nonnormal}
For directed graph families with non-normal Laplacians, a conclusion regarding the sequence  $\mathrm{Re}\{\lambda_2(\Ggn)\}$ cannot in general be drawn from the mirror graphs. A noteable counter-example is the directed path graph on $N$ nodes with the edge set $\Ee_N = \{(i,i+1)~|~ i = 1,\ldots,N-1\}$. Here, $\mathrm{Re}\{\lambda_2(\Ggn)\} = 1$ for any $N$, while ${\lambda_2(\hat{\Gg}_N) = 1 - \cos\frac{\pi}{N}}$.  For general directed graphs, the sequence $\mathrm{Re}\{\lambda_2(\Ggn)\}$  must therefore be checked case by case. 
%Such directed graph families must be evaluated on a case-by-case basis.
%\emma{Some more background: $L^s$ will be the Laplacian of the mirror graph if the graph is \textit{balanced}. All we know then is that $\mathrm{Re}\{\lambda_2(\Ggn)\}\ge \lambda_2(\hat{\Gg}_N)$ (with equality if $L$ normal), which does not help us. I think the above remark is enough. I have spent quite some time trying to find out what can be said about $\rel$ for general directed graphs but conclude (given the counterexample) that they have to be evaluated case by case. }
\end{rem}
%
%We introduce the following terminology:
%\begin{definition}
%The graph~$\mathcal{G}$ is said to have \emph{decreasing algebraic connectivity} if for its mirror graph~$\hat{\mathcal{G}}$, the algebraic connectivity $\lambda_2 \rightarrow 0$ as $N \rightarrow \infty$. 
%\end{definition}
%\todo{revise this paragraph, include one on expander graphs}
%This means that Theorem~\ref{thm:main} will apply to graphs with decreasing algebraic connectivity, and it suffices to identify this property in undirected graphs. We next give a (non-exhaustive) account of classes of graphs with this property.% by recalling known upper bounds on their algebraic connectivity. 

\subsubsection{Lattices, fuzzes, and their embedded graphs}
\label{sec:lattices}
Consider a graph over the $d$-dimensional toric lattice~$\mathbb{Z}_M^d$ with ${N = M^d}$ nodes, and let each node be connected to its $r$ neighbors in each direction. Such a graph is called an $r$\textit{-fuzz}, and in this case the (maximal) neighborhood size is $q = 2rd$.
\begin{lemma}[Algebraic connectivity of $r$-fuzz lattices]
\label{lem:fuzz}
For toric $r$-fuzz lattice graphs in $d$ dimensions,
\begin{equation}
\label{eq:lambda2fuzz}
\lambda_2(\Ggn) = \mathcal{O}\left(\frac{1}{N^{2/d}}\right)
%\lambda_2 \le \frac{c}{N^{2/d}},
\end{equation}
%where $c$ is a constant that depends on the fixed parameters~$r$ (that is, $q$), $w_{\max}$ and~$d$, but not on $N$.
\end{lemma}
%\vspace{1.6mm}
\begin{proof}
{See~\cite{Tegling2019}. }
%There, the edge weights are assumed uniform, but the result~\eqref{eq:lambda2fuzz} follows from setting all edge weights to~$w_{\max}$ and then considering Lemma~\ref{lem:embed}.
\end{proof}
The decay rate~\eqref{eq:lambda2fuzz} also holds for any subgraph of the toric $r$-fuzz lattice, that is, any graph that is \emph{embeddable} in it. In particular, lattices without periodic boundary conditions. This follows from the following important lemma:
\begin{lemma}
\label{lem:addedge}
Adding an edge to a graph $\Ggn$, or increasing the weight of an edge, can only increase (or leave unchanged)~$\lambda_2(\Ggn)$, and vice versa.
\end{lemma}
\begin{proof}
\textit{Adding an edge: }See~\cite[Theorem 3.2]{Mohar1991}. % for addition of an edge. 
\textit{Increasing edge weight: }If the weight of the edge $(i',j')$ is increased by $\Delta w$, the new graph Laplacian can be written $L' = L + \Delta L$, where $\Delta L$ is also a positive semidefinite graph Laplacian (of a disconnected graph). By~\cite[Theorem 2.8.1]{BrouwerBook} this implies that $\lambda_l' \ge \lambda_l$ for each $l = 1,\ldots,N$, and in particular~$\lambda_2' \ge \lambda_2$.%, where $\lambda_2'$ is the algebraic connectivity of the graph with the added weight.
% is such that $[\Delta L]_{i',j'} =[\Delta L]_{j',i'}  = -\Delta w$, $[\Delta L]_{i',i'} =[\Delta L]_{j',j'}= \Delta w $ and remaining elements zero. Since $\Delta L $ is symmetric positive semidefinite, $\lambda_l' \ge \lambda_l$ for each $l = 1,\ldots,N$~\cite[Theorem 2.8.1]{BrouwerBook}. It follows that $\lambda_2$. 
\end{proof}

%\vspace{0.8mm}
\subsubsection{Planar graphs}
Planar graphs are embeddable in two-dimensional lattices, so Lemma~\ref{lem:fuzz} applies. For this important case, however, a more precise bound is available:
\begin{lemma}[Algebraic connectivity of planar graphs]
\label{lem:planar}
For planar graphs,
\begin{equation}
\lambda_2(\Ggn) \le \frac{8qw_{\max}}{N},
\end{equation}
\end{lemma}
%\vspace{1.6mm}
\begin{proof}
See \cite[Theorem 6]{Spielman2007}.
\end{proof}
%
%%%\vspace{0.8mm}
%%\subsubsection{Constant-genus graphs}
%%\emma{I have removed this section to get the paper under 12 pages}
%\textcolor{gray}{Planar graphs can be generalized to graphs with constant \emph{genus}. The genus of a planar graph is $g =0$. Higher genus implies that the graph can be drawn on a surface with $g$ handles (or ``holes'') without any one edge crossing another. For example, a torus would correspond to $g =1$ and a pretzel shape to $g = 3$. 
%\begin{lemma}[Alg. connectivity of constant-genus graphs]
%For graphs with constant and bounded genus~$g$,
%%Let $\{\mathcal{G}_N\}$ have constant and bounded genus $g$. Then % and let~\eqref{eq:qdef} hold. Then
%\[ \lambda_2(\Ggn) = \mathcal{O}\left(\frac{1}{N} \right).\]
%%\[\lambda_2  \le  \frac{c_2}{N} , \]
%%where $c_2$ is a constant that depends on $q$, $g$ and $w_{\max}$, but not on~$N$.
%\end{lemma}
%%\vspace{1.6mm}
%\begin{proof}
%See \cite[Theorem 2.3]{Kelner2006}.
%\end{proof}}

%\vspace{0.8mm}
\subsubsection{Tree graphs with growing diameter}
The \emph{diameter} $\mathrm{diam}\{\mathcal{G}\}$ of a graph~$\mathcal{G}$ is defined as the longest distance between any two nodes in the graph. If we let $\mathcal{G}$ be a \emph{tree graph}, then, by \cite[Corollary 4.4]{Grone1990} it holds $\lambda_2 \le 2w_{\max}\left(1- \cos\left(\frac{\pi}{\mathrm{diam}(\mathcal{G}) +1}\right) \right)$. 
This allows us to show the following lemma. 
\begin{lemma}[Algebraic connectivity of tree graphs]
For tree graphs,
\begin{equation}
\label{eq:tree}
\lambda_2(\Ggn) \le \frac{\pi^2 w_{\max}}{(\mathrm{diam}(\mathcal{G}_N) +1 )^2}.
\end{equation}
%and if $\mathrm{diam}\{\mathcal{G}\} \rightarrow \infty $ as  $N \rightarrow \infty$, then $\lambda_2 \rightarrow 0$.
\end{lemma}
%\vspace{1.6mm}
\begin{proof}
Follows from the relation above and the fact that $1-\cos x \le \frac{x^2}{2}$ for any~$x$. %Clearly, the right hand side is decreasing in $\mathrm{diam}\{\mathcal{G}\}$.
\end{proof}
In our case, the tree diameter will always increase in~$N$ as a consequence of Assumption~\ref{ass:q}. Therefore,  $\{\lambda_2(\Ggn)\} \rightarrow 0$ as $N \rightarrow \infty$.
%As a consequence of Assumption~\ref{ass:q}, the tree diameter must increase in~$N$ and therefore $\lambda_2(\Ggn) \rightarrow 0$ as $N \rightarrow \infty$.

\begin{figure*}[htb]
  \centering
  \subfloat[][{Network graph}]{
  \includegraphics[width = 0.26\textwidth]{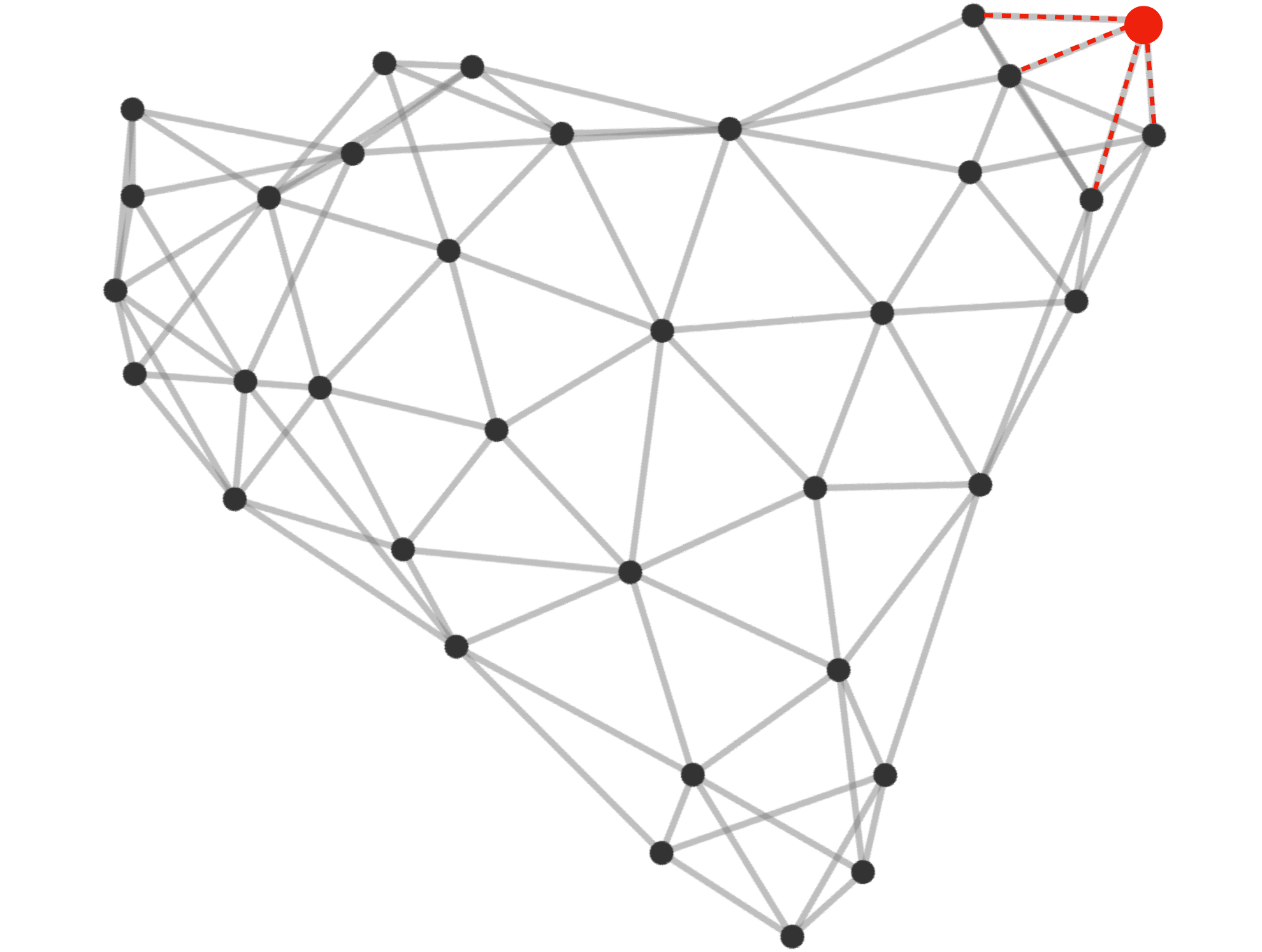}
  \label{fig:graph}  }
  \subfloat[][{$N=34$}] {
  \begin{tikzpicture}
		\node[inner sep=0pt] (img) at (0,0)
		{ \includegraphics[width = 0.37\textwidth]{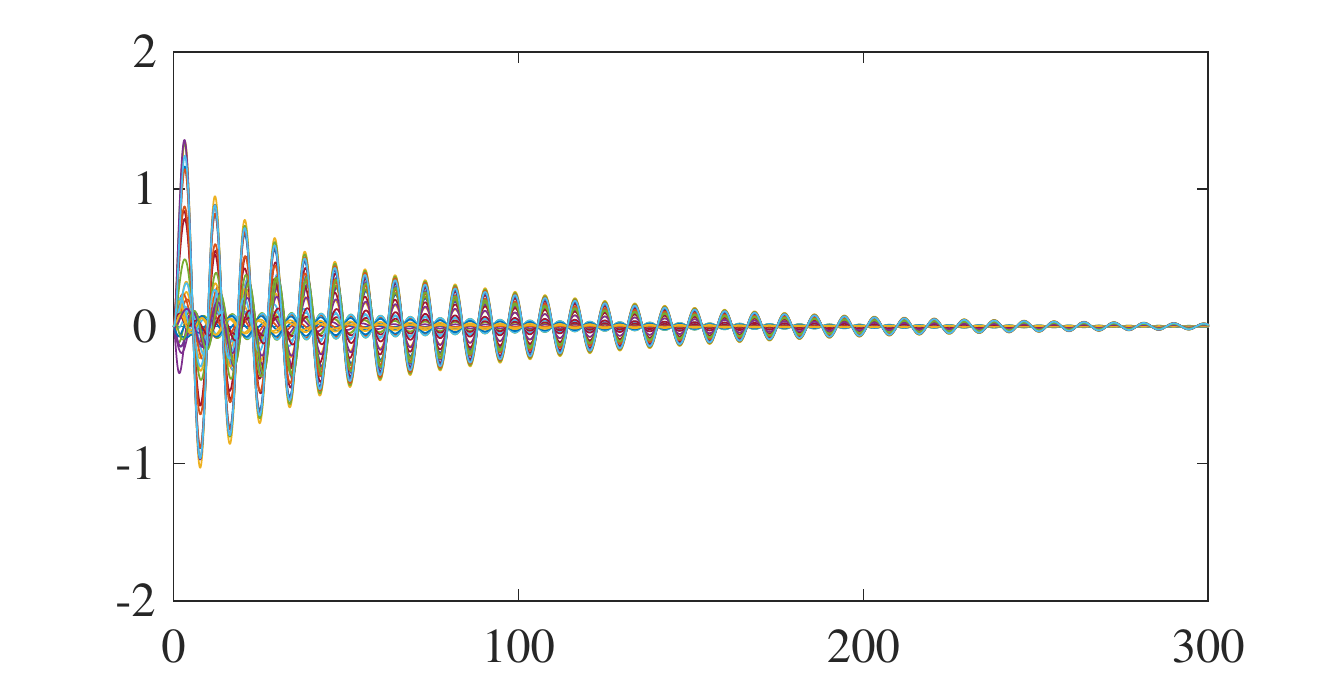}};
		\draw (0, -1.8) node {{\footnotesize Time $t$}};
		\node[xshift=-3.3cm,rotate=90,anchor=north]{{\footnotesize Position}};
	\end{tikzpicture}
    \label{fig:34node}}
   \subfloat[][{ $N=35$}] {
    \begin{tikzpicture}
		\node[inner sep=0pt] (img) at (0,0)
		{  \includegraphics[width = 0.37\textwidth]{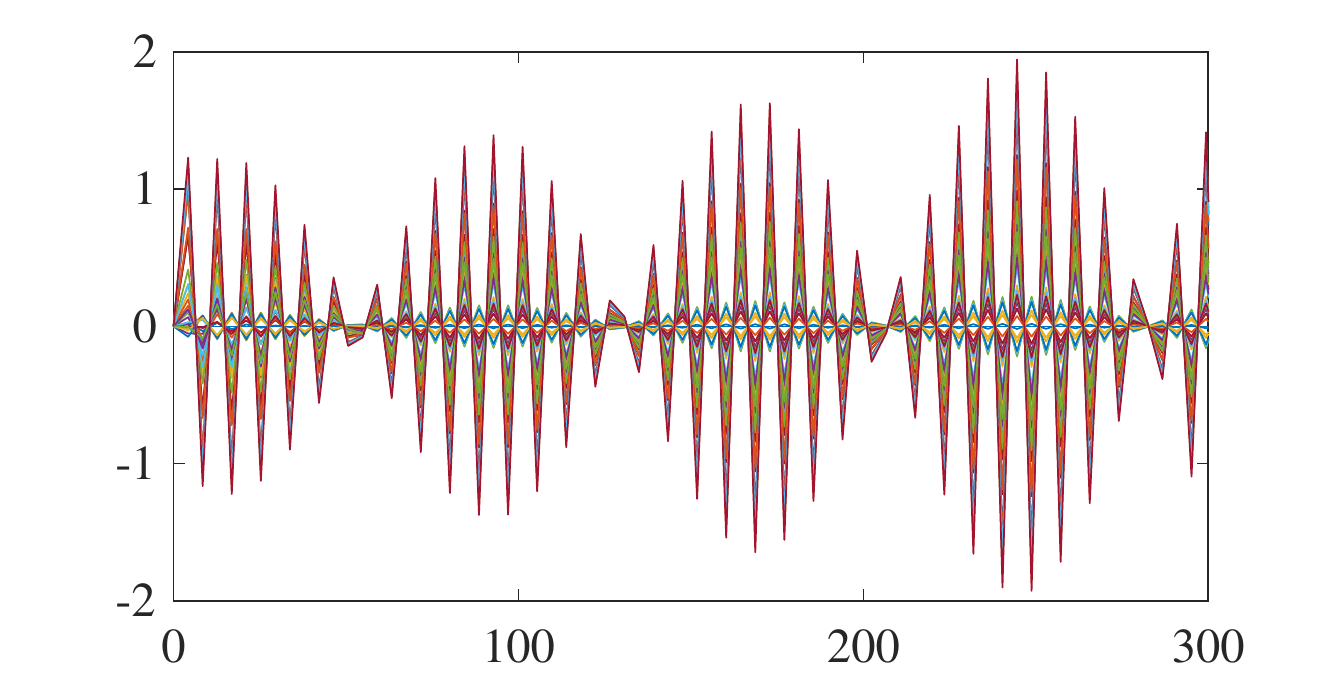}};
		\draw (0, -1.8) node {{\footnotesize Time $t$}};
		\node[xshift=-3.3cm,rotate=90,anchor=north]{{\footnotesize Position}};
	\end{tikzpicture}
   \label{fig:35node}} \\
       \caption{
{Simulation of $3^{\mathrm{rd}}$ order consensus over graph depicted in (a) subject to random initial accelerations. In (b) the network's 34 agents converge to an equilibrium. In (c) a $35^{\mathrm{th}}$ node has been added, indicated by red color in the graph. This addition leads to instability. The plots (b) and (c) show position trajectories relative to Agent no. 1.   } }
	%\vspace*{-1.5em}
\label{fig:simulation}
\end{figure*}

\subsubsection{General condition -- Cheeger constant}
In general, the algebraic connectivity decreases in~$N$ in any undirected graph family that is not an \emph{expander family}. To define expander families, we first require the Cheeger constant (also called isoperimetric constant), which for non-regular weighted graphs can be defined as~\cite{ChungBook}
\begin{equation}
h(\Gg) = \inf_{X \subset \Vv} \frac{ |\partial X|_d}{\min\{|X|_d, |\bar{X}|_d\} } .
\label{eq:cheeger}
\end{equation}
Here, $\bar{X} = \Vv \backslash X$ and $\partial X= \{j \in \bar{X} ~|~ (i,j) \in \Ee,~i \in X \}$ is  {called the }the boundary set of $X$. Sets of nodes are measured here as $|W|_d := \sum_{i \in W}d_i$, where the nodal degree $d_i = \sum_{j \in \mathcal{N}_i} w_{ij}$. Loosely speaking, a large Cheeger constant implies that any subset of nodes is well connected to the rest of the graph, and it is not possible to find a ``bottleneck'' that separates two graph partitions from each other as they grow. See~\cite{Tegling2019NECSYS} for an elaboration and an algebraic condition. 
Now, consider the following definition. 
\begin{definition}[Expander family]
Let $\{\Gg_N\}$ be a graph family in which $N\rightarrow \infty$. 
If the sequence $\{h(\Gg_N)\}$ is bounded away from zero, $\{ \Gg_N\}$ is an expander family. 
\end{definition}
%%\vspace{-2mm}

The following well-established result relates expander families to our problem:
\begin{lemma}
\label{res:expanders}
The sequence $\{\lambda_2(\Gg_N)\}$ is bounded away from zero as $N\rightarrow \infty$ if and only if $\{ \Gg_N\}$ is an expander family.
%The graph family $\{ \Gg_N\}$ is an expander family if and only if the sequence $\{\lambda_2(\Gg_N)\}$ is bounded away from zero as $N\rightarrow \infty$. 
\end{lemma}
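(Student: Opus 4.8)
The plan is to derive Lemma~\ref{res:expanders} from the discrete Cheeger inequality, which furnishes a two-sided comparison between the algebraic connectivity $\lambda_2(\Ggn)$ and the Cheeger constant $h(\Ggn)$. Concretely, I would produce constants $c_1,c_2>0$ that depend only on the degree bounds, and are therefore \emph{uniform in $N$} under Assumptions~\ref{ass:q}--\ref{ass:boundedweights}, such that
\[
c_1\, h(\Ggn)^2 \;\le\; \lambda_2(\Ggn) \;\le\; c_2\, h(\Ggn).
\]
Both implications of the lemma then follow at once. If $\{h(\Ggn)\}$ is bounded away from zero (expander family), the left inequality gives $\lambda_2(\Ggn)\ge c_1 \inf_N h(\Ggn)^2>0$; conversely the right inequality gives $h(\Ggn)\ge \lambda_2(\Ggn)/c_2$, so $\{\lambda_2(\Ggn)\}$ bounded away from zero forces $\{h(\Ggn)\}$ bounded away from zero. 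All the substance therefore sits in the two inequalities and the uniformity of $c_1,c_2$.

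Because the Cheeger constant~\eqref{eq:cheeger} is defined through the volume measure $|W|_d$, it pairs most naturally with the normalized Laplacian $\mathcal{L}=D^{-1/2}LD^{-1/2}$ ($D$ the diagonal degree matrix), whose smallest nonzero eigenvalue $\mu_2$ obeys the classical bound $\tfrac12 h^2\le \mu_2\le 2h$ (see \citet[Chapter 2]{ChungBook}). I would first reconcile $\mu_2$ with the combinatorial eigenvalue $\lambda_2(L)$ of interest. Writing both as Rayleigh quotients of the common Dirichlet form $\mathcal{E}(v)=\sum_{(i,j)\in\Ee_N} w_{ij}(v_i-v_j)^2$ and using $d_{\min}\|v\|^2\le v^TDv\le d_{\max}\|v\|^2$ with $d_{\max}\le q\,w_{\max}$ (Assumptions~\ref{ass:q}--\ref{ass:boundedweights}) and a uniform lower bound $d_{\min}>0$ (which holds for the unweighted families surveyed in this section --- paths, lattices, trees, planar graphs), the Courant--Fischer characterization shows $\lambda_2$ and $\mu_2$ are comparable up to factors between $d_{\min}$ and $d_{\max}$. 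Composing with Chung's bound yields the displayed inequality with uniform $c_1,c_2$.

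The easy (upper) direction $\mu_2\le 2h$ I would obtain by a test-function argument: take a set $X$ attaining $h(\Ggn)$ to within $\varepsilon$, form $v=\mathbf{1}_X-\alpha\mathbf{1}$ with $\alpha$ chosen so that $v$ is orthogonal to the relevant constant direction, and evaluate $\mathcal{E}(v)/(v^TDv)$ via the Rayleigh--Ritz theorem \citep[Theorem 4.2.2]{HornJohnson}; the numerator is exactly the boundary weight $|\partial X|_d$ and the denominator is controlled by $\min\{|X|_d,|\bar X|_d\}$.

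The hard (lower) direction $\mu_2\ge \tfrac12 h^2$ is the main obstacle and is where the real work lies. Here I would take a Fiedler vector $f$ realizing $\mu_2$, order the vertices by the values of $f$, and perform a \emph{sweep}: among the threshold cuts $X_t=\{i:f_i>t\}$ one shows, through a coarea/Cauchy--Schwarz estimate, that at least one satisfies $|\partial X_t|_d/\min\{|X_t|_d,|\bar X_t|_d\}\le\sqrt{2\mu_2}$, whence $h\le\sqrt{2\mu_2}$, i.e. $\mu_2\ge h^2/2$. Making the coarea step rigorous and correctly handling the degree weighting and the normalization/splitting of $f$ is the delicate part; everything else is bookkeeping.
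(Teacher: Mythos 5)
The paper offers no proof of its own here---it simply points to the literature---and your outline is precisely the standard argument that reference contains: the two-sided Cheeger inequality $\tfrac12 h^2 \le \mu_2 \le 2h$ for the normalized Laplacian (easy direction via an indicator test function and Rayleigh--Ritz, hard direction via the threshold-sweep/Cauchy--Schwarz argument on a Fiedler vector), transferred to $\lambda_2(L)$ by comparing the Rayleigh quotients through $d_{\min}$ and $d_{\max}$, which under Assumptions~\ref{ass:q}--\ref{ass:boundedweights} gives constants uniform in $N$. Your one substantive addition---that uniformity of the constants also requires nodal degrees bounded \emph{below}---is correct and genuinely needed (rescaling all edge weights by $1/N$ leaves $h(\Gg_N)$ unchanged but drives $\lambda_2(\Gg_N)\to 0$, so the ``if'' direction fails without it), and you rightly note that this holds for the uniformly weighted families the section actually surveys.
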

%\vspace{-2mm}
See e.g. \cite{ChungBook} for a proof. Lemma~\ref{res:expanders} implies that a bounded-degree graph family can have an algebraic connectivity that does not decrease towards zero, if the same holds for the Cheeger constant. The equation~\eqref{eq:cheeger} reveals that this requires edges to connect across the entire network. In other words, that feedback is \emph{non-localized}. 

\begin{rem}
It is noteworthy that Theorem~\ref{thm:leaderfollower} for leader-follower consensus applies even though $\{\Gg_N\}$ is an expander family. This means that leaderless consensus, despite being {scalably stable} in expander graphs, will be destabilized if one agent becomes a leader ("is grounded"). This fragility is described in detail in~\cite{Tegling2019NECSYS}.
%Expander families are not affected by Theorem~\ref{thm:main}, but note that Theorem~\ref{thm:leaderfollower} still applies.
%Note that while  does not apply to expander families, Theorem~\ref{thm:leaderfollower} does. 
%This means that consensus over expander graphs can be destabilized if one  agent becomes a leader. This fragility is described in detail in~\cite{Tegling2019NECSYS}.
\end{rem}

%\todo{Give the expander criterion. Define non-expanding as localized. Refer to Necsys paper for algebraic condition. }

\subsection{Numerical examples}
\label{sec:numerical}
We next provide two numerical examples to illustrate the issue of {scalable stability} in high-order consensus. 
%\todo{Write short section intro, or put examples in expample environment.  }

\subsubsection{Critical network size, locality and model order}
%Assumption~\ref{ass:q} of bounded neighborhoods is important for our main result. Indeed, if each agent's neighborhood were allowed to grow as more and more agents are added to the network, the high-order consensus algorithm could be {scalably stable}. 
%As an example, consider 
Consider a family of undirected path graphs where each node is connected to its $q/2$ nearest neighbors in each direction (i.e., a $q/2$-fuzz). For any given $N$, the graph's connectivity is greater, the greater $q$ is. Increasing~$q$ thus delays the violation of the stability criteria in Theorem~\ref{thm:main}.
%
%If all edge weights are 1, the algebraic connectivity of these graphs is
%\begin{equation}
%\label{eq:algconnpath}
%\lambda_2(\Ggn) =  \sum_{\substack{k = 1  } }^{q/2} 2\left(1- \cos \left(\frac{\pi k}{N} \right) \right).
%\end{equation}
%\emma{This is not quite right :(...}
%%\[ \lambda_2(\Ggn) =  \sum_{\substack{k = 1  } }^{q/2} 2\left(1- \cos \left(\frac{\pi k}{N} \right) \right),\]
%%Clearly, for a given $N$, $\lambda_2$ is larger the larger $q$. This delays the violation of the stability conditions as . 
%For any given~$N$, this is larger the larger the neighborhood size $q$ is. Increasing~$q$ thus delays the violation of the stability criteria in Theorem~\ref{thm:main}. % as $N$ grows. 

In Figure~\ref{fig:criticalN}, we depict the critical network size~$\bar{N}$ as a function of the neighborhood size~$q$. Here, we have selected a consensus algorithm where $a_0 = 0.1$, $a_1 = 0.8$, ${a_2,a_3,a_4 = 1}$, and all edge weights $w_{ij} = 1$. The plot shows that increasing~$q$ increases the critical network size, faster than linearly. In Section~\ref{sec:mitigation} we discuss the precise scaling of~$q$ in $N$ required to defer instability completely. 

We also note that as the model order~$n$ increases, the system becomes unstable at smaller~$\bar{N}$. This is because the higher-order Routh-Hurwitz conditions in~\eqref{eq:Tondlcrit} are violated before the lower-order ones. It is also in line with common control-theoretic intuition.

\subsubsection{Instability through node addition}
Our second example illustrates the phase transition -- from consensus to instability -- that the system experiences as the critical network size is reached. Figure~\ref{fig:graph} illustrates a planar graph that has been randomly generated by means of triangulation. Here, the maximum neighborhood size is $q = 8$ and the median is~5. All edge weights are set to 1. 

We consider a third-order consensus algorithm: \[x_i^{(3)} = -\sum_{j \in \mathcal{N}_i}\left[  0.5({x}_i - {x}_j)  + (\dot{x}_i - \dot{x}_j)+ (\ddot{x}_i - \ddot{x}_j) \right], \]
which by Lemma~\ref{thm:consensus} will achieve consensus if $\lambda_2>0.5$. With 34 nodes, the graph in Figure~\ref{fig:graph} has $\lambda_2(\Gg_{34}) = 0.536$ and the system achieves consensus, as seen from the simulation in Figure~\ref{fig:34node}. We then add a $35^{\mathrm{th}}$ node along with 4 connecting edges, as indicated in red color in the graph in Figure~\ref{fig:graph}. Now, $\lambda_2(\Gg_{35}) = 0.493$ and the system becomes unstable.\footnote{This particular value for $\lambda_2(\Gg_{35})$ depends on the placement of the $35^{\mathrm{th}}$ node. Other placements can allow the critial $\bar{N}>35$, but instability occurs eventually.  }
%When one more node is added, along with 4 connecting edges, $\lambda_2^{35} = 0.493$ and the system becomes unstable. 
Figure~\ref{fig:35node} shows how the agents' positions~$x$ oscillate at an increasing amplitude.

\section{Scale fragility in second-order consensus}
\label{sec:2ndorder}
Next, we turn our attention to consensus in second-order integrator networks ($n = 2$). This case is particularly relevant as this model is used in formation control problems~\cite{OlfatiSaber2006}. {Scalable stability} is easily satisfied in second-order consensus if the underlying graph family is \emph{undirected} (though performance issues like string instability~\cite{Swaroop1996} and lack of coherence~\cite{Bamieh2012} may still be a concern).
%With second-order consensus it is easy to ensure {scalable stability} if the underlying graph family is undirected 
We show here, however, that it fails to scale stably in certain families of \emph{directed} graphs {with complex eigenvalues}.  

{We remind the reader that the Laplacian eigenvalues are ordered as $0 = \lambda_1(\Ggn) < \mathrm{Re}\{\lambda_2(\Ggn)\}\le \ldots \le \mathrm{Re}\{\lambda_N(\Ggn)\} $ and state the following theorem, whose condition is illustrated in Figure~\ref{fig:evcond}.}
%{These are graphs where a Laplacian eigenvalue (which may be $\lambda_2$, but need not be) approaches the origin from anywhere but along the real axis.} 

%Consider the following theorem. 
%For example, and as observed in~\cite{Cantos2016,HermanThesis,Stuedli2017}, in ring graphs with asymmetric nearest-neighbor interactions. We begin by stating a more general result. 
%{Remind of ordering of eigenvalues. ``Before we state the theorem, we remind the reader that Laplacian eigenvalues are ordered as $0 = \lambda_1 < \mathrm{Re}\{\lambda_2\}\le \ldots \le \mathrm{Re}\{\lambda_N \}$'' }

\begin{theorem}
\label{thm:2ndorder}
{If $n\ge 2$, no control on the form~\eqref{eq:consensuscompact}, subject to Assumptions~\ref{ass:finite}--\ref{ass:fixed}, is {scalably stable} in graph families where, for a fixed index $\bar{l}<N$,}
\begin{enumerate}
\item {$\mathrm{Re}\{\lambda_{\bar{l}}(\Ggn)\}\rightarrow 0$ as $N \rightarrow \infty$, \emph{and}}
\item {for each $N$ and at least one $l \in \{2,3,..., \bar{l}\}$ it holds ${\mathrm{arg}\{\lambda_l(\Ggn)\}> {\psi} }$, where ${\psi \in (0,\pi/2)}$ is a constant angle independent of~$N$. }
\end{enumerate}
\end{theorem}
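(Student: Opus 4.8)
The plan is to follow the block-diagonalization already carried out in the proof of Theorem~\ref{thm:main}: pre- and post-multiplying $\mathcal{A}$ by $\mathbf{T}=\mathrm{diag}\{T,\ldots,T\}$ as in~\eqref{eq:diagonalizing} and then permuting rows and columns reduces the spectrum of $\mathcal{A}$ to the union of the roots of the per-block characteristic polynomials $p_l(s)=s^n+a_{n-1}\lambda_l s^{n-1}+\cdots+a_1\lambda_l s+a_0\lambda_l$ of~\eqref{eq:charpoly}, for $l=2,\ldots,k$. Scalable stability requires every such $p_l$ to be Hurwitz for every~$N$, so it suffices to exhibit, for all sufficiently large~$N$, a single index~$l$ for which $p_l$ has a root in the closed right half plane.

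For $n\ge 3$ the statement is immediate from Theorem~\ref{thm:main}: by the eigenvalue ordering $0<\mathrm{Re}\{\lambda_2(\Ggn)\}\le \mathrm{Re}\{\lambda_{\bar l}(\Ggn)\}$, so condition~(1) forces $\mathrm{Re}\{\lambda_2(\Ggn)\}\to 0$ and the earlier theorem applies verbatim. One may alternatively keep the selected index and observe that the second Routh--Hurwitz condition~\eqref{eq:RHcriterion}, whose two bracketed factors tend to $-a_{n-3}$ and $-a_{n-2}$ as $\mathrm{Re}\{\lambda_l\}\to 0$, becomes negative once $\mathrm{Re}\{\lambda_l\}$ is small, since $\lambda_l\neq 0$ keeps at least one of the positive prefactors nonzero. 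The substantive case is therefore $n=2$, to which the rest of the argument is devoted.

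For $n=2$ the block polynomial is $p_l(s)=s^2+a_1\lambda_l s+a_0\lambda_l$ with, in general, complex coefficients. I would locate the stability boundary by substituting $s=i\omega$ and separating real and imaginary parts; eliminating $\omega$ yields the imaginary-axis-crossing relation $a_1^2\,\mathrm{Re}\{\lambda_l\}\,|\lambda_l|^2=a_0(\mathrm{Im}\{\lambda_l\})^2$, and evaluating any real positive $\lambda_l$ (the undirected case, known to be stable) fixes the stable side of the boundary. Writing $\lambda_l$ in polar form, this is equivalent to the compact stability condition
\[
\mathrm{Re}\{\lambda_l(\Ggn)\} > \frac{a_0}{a_1^2}\,\sin^2\!\big(\arg\{\lambda_l(\Ggn)\}\big),
\]
which must hold for every $l=2,\ldots,k$ and every~$N$. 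The same relation follows from the complex Routh--Hurwitz criteria of~\ref{RHapp}; the first criterion~\eqref{eq:trivialcrit} is automatic since $\mathrm{Re}\{\lambda_l\}>0$.

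It then remains to drive this condition to failure. For each~$N$, condition~(2) supplies an index $l(N)\in\{2,\ldots,\bar l\}$ with $\arg\{\lambda_{l(N)}(\Ggn)\}>\psi$, so the right-hand side exceeds the fixed positive constant $\tfrac{a_0}{a_1^2}\sin^2\psi$. The eigenvalue ordering gives $\mathrm{Re}\{\lambda_{l(N)}(\Ggn)\}\le \mathrm{Re}\{\lambda_{\bar l}(\Ggn)\}$, which by condition~(1) tends to zero. Hence for all sufficiently large~$N$ the left-hand side drops below $\tfrac{a_0}{a_1^2}\sin^2\psi$, the condition is violated, $p_{l(N)}$ acquires a root with nonnegative real part, and consensus is lost. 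The main obstacle is this $n=2$ step: deriving the complex-coefficient stability condition in a clean polar form and fixing the correct orientation of the inequality, together with the bookkeeping that the destabilizing index $l(N)$ may vary with~$N$, so that one argues along a sequence of blocks rather than a single fixed one, using the ordering to control $\mathrm{Re}\{\lambda_{l(N)}\}$ uniformly.
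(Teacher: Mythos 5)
Your proposal is correct and follows essentially the same route as the paper: block-diagonalize to the per-eigenvalue quadratics $p_l(s)=s^2+a_1\lambda_l s+a_0\lambda_l$, reduce stability to the condition $a_1^2\,\mathrm{Re}\{\lambda_l\}\,|\lambda_l|^2>a_0(\mathrm{Im}\{\lambda_l\})^2$ (your polar form is an equivalent rewriting of the paper's inequality~\eqref{eq:conditionsecondorder}), and then use condition~(2) to bound the right-hand side away from zero while condition~(1) drives the left-hand side to zero. The only cosmetic differences are that you obtain the inequality via an imaginary-axis-crossing argument rather than directly from the complex Routh--Hurwitz determinant $\Delta_4$ (which you correctly note yields the same relation), and that you track the possibly $N$-dependent index $l(N)$ slightly more explicitly than the paper does.
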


\begin{proof}
For $n\ge 3$ the result follows immediately from Theorem~\ref{thm:main} { (note, $\mathrm{Re}\{\lambda_{\bar{l}}(\Ggn)\}\rightarrow 0 \Rightarrow \mathrm{Re}\{\lambda_2(\Ggn)\}\rightarrow 0$)}. For $n = 2$, we proceed as in the proof of Theorem~\ref{thm:main} to obtain the characteristic polynomials
\begin{equation}
\label{eq:secondorderchareqn}
p_l(s) = s^2 + a_1\lambda_ls + a_0\lambda_l,
\end{equation} for $l = 1,\ldots,N$.
The Routh-Hurwitz criterion derived from $\Delta_4>0$ in~\eqref{eq:Tondlcrit} with $f_{n-3}=g_{n-3} =0$ becomes
\begin{align}
 \nonumber
 a_1^2 \rell [  (\rell)^2 +(\imll)^2  ] - a_0(\imll)^2  >0 .
\end{align}
If $\imll = 0$, this is clearly satisfied since $\rell>0$. For all $l \in \{2,\ldots,N\}$ where $\imll \neq 0$ we can re-formulate the condition as 
\begin{align}
\label{eq:conditionsecondorder} 
 a_1^2\rell \left[\left(\frac{\rell}{\imll}\right)^2 +1 \right] - a_0  >0 .
\end{align}
If the expression in brackets is upper bounded by some constant, this condition will be violated whenever $\rell$ is sufficiently small. 
{Therefore, if the condition \eqref{eq:conditionsecondorder} is evaluated for a graph family $\{\Gg_N\}$ in which there are eigenvalues for which $\mathrm{Re}\{\lambda_l(\Ggn)\}\rightarrow 0$  as $N\rightarrow \infty$, and it holds $\left(\frac{\mathrm{Re}\{\lambda_l(\Ggn)\}}{\mathrm{Im}\{\lambda_l(\Ggn)\}}\right)^2\le \mathrm{const.}$ for at least one of them, then the condition is eventually violated, and stability is lost.  In our case, $\mathrm{Re}\{\lambda_{\bar{l}}(\Ggn)\}\rightarrow 0$ for some index $\bar{l}$ implies $\mathrm{Re}\{\lambda_{l}(\Ggn)\}\rightarrow 0$ for $2\le l \le \bar{l}$, so we must check all eigenvalues $2\le l \le \bar{l}$. }
%for some $l\in \{2,\ldots,N\}$ it holds $\left(\frac{\mathrm{Re}\{\lambda_l(\Ggn)\}}{\mathrm{Im}\{\lambda_l(\Ggn)\}}\right)^2\le \mathrm{const.}$ and $\mathrm{Re}\{\lambda_l(\Ggn)\}\rightarrow 0$ as $N\rightarrow \infty$, it is eventually violated. 
%The control will then lack {scalable stability}. 

Now, $\left(\frac{\mathrm{Re}\{\lambda_l(\Ggn)\}}{\mathrm{Im}\{\lambda_l(\Ggn)\}}\right)^2 \le \mathrm{const.}$ is equivalent to having an upper bound on $\frac{\mathrm{Re}\{\lambda_l(\Ggn)\}}{\mathrm{Im}\{\lambda_l(\Ggn)\}}$ for an eigenvalue in the first quadrant (recall, the Laplacian eigenvalues appear in conjugate pairs in the RHP). This, in turn, is equivalent to having the argument $\mathrm{arg}\{\lambda_l(\Ggn)\}$ bounded away from zero. In other words, $\mathrm{arg}\{\lambda_l(\Ggn)> \psi$ for some fixed $\psi \in (0,\pi/2)$, and the theorem statement follows.
\end{proof}

\begin{figure}[t]
\centering
\includegraphics[scale=0.66]{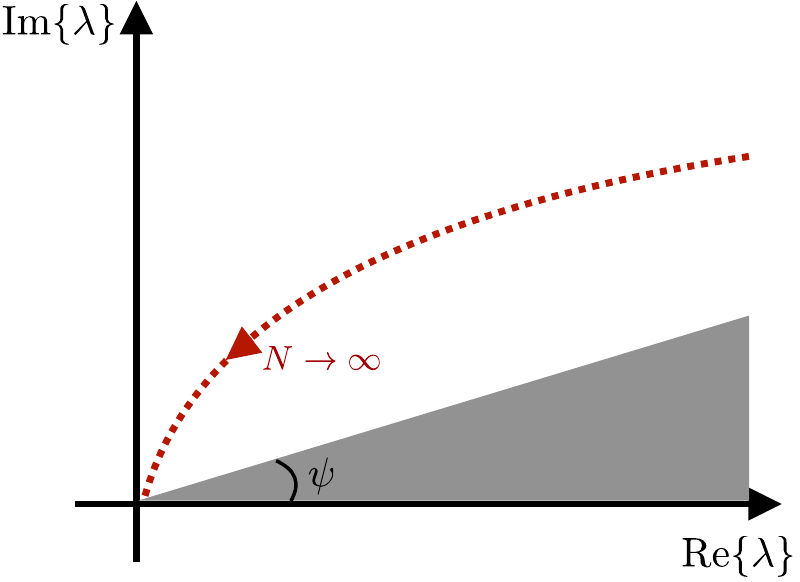}
\caption{Illustration of the eigenvalue condition in Theorem~\ref{thm:2ndorder}. If, as the network grows, a Laplacian eigenvalue approaches the origin at a non-zero angle, then second-order consensus {lacks scalable stability}. The example trajectory illustrates $\lambda_2$ of a directed ring graph. 
 }
\label{fig:evcond}
\end{figure}

%Theorem~\ref{thm:2ndorder} implies that in graph families where not only $\mathrm{Re}\{\lambda_l(\Ggn)\}\rightarrow 0$ for some $l \in \{2,\ldots,N\}$, 
%Theorem~\ref{thm:2ndorder} implies that if a graph family is such that the real part of some Laplacian eigenvalue decreases towards zero, while the same eigenvalue has an imaginary part that does not decrease faster towards zero, then 
%
{A simpler statement pertaining to the special case of $\lambda_2$ can be stated as follows:}
\begin{corollary}
{If $n\ge 2$, no control on the form~\eqref{eq:consensuscompact}, subject to Assumptions~\ref{ass:finite}--\ref{ass:fixed}, is {scalably stable} in graph families where $\mathrm{Re}\{\lambda_2(\Ggn)\}\rightarrow 0$ as $N \rightarrow \infty$ while ${\mathrm{arg}\{\lambda_2(\Ggn)\}> {\psi} }$ for some constant ${\psi \in (0,\pi/2)}$ that is independent of~$N$.}
\end{corollary}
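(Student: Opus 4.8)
The plan is to obtain this statement as the special case $\bar{l} = 2$ of Theorem~\ref{thm:2ndorder}, which has already been established. Setting $\bar{l}=2$, the first hypothesis of that theorem, namely $\mathrm{Re}\{\lambda_{\bar{l}}(\Ggn)\}\rightarrow 0$, reduces verbatim to the corollary's assumption $\mathrm{Re}\{\lambda_2(\Ggn)\}\rightarrow 0$. The index set $\{2,3,\ldots,\bar{l}\}$ appearing in the second hypothesis collapses to the singleton $\{2\}$, so the requirement that $\mathrm{arg}\{\lambda_l(\Ggn)\}>\psi$ hold for \emph{at least one} $l$ in that set becomes simply $\mathrm{arg}\{\lambda_2(\Ggn)\}>\psi$, which is again exactly the corollary's standing assumption. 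Hence both conditions of Theorem~\ref{thm:2ndorder} are satisfied, and its conclusion carries over directly.

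For completeness I would also indicate how the argument runs without invoking the theorem wholesale, in case a self-contained proof is preferred. For $n\ge 3$ instability follows from Theorem~\ref{thm:main}, since $\mathrm{Re}\{\lambda_2(\Ggn)\}\rightarrow 0$ is precisely its hypothesis. For $n=2$, I would form the characteristic polynomial~\eqref{eq:secondorderchareqn} associated with $\lambda_2$ and evaluate the complex Routh-Hurwitz condition~\eqref{eq:conditionsecondorder}. The assumption $\mathrm{arg}\{\lambda_2(\Ggn)\}>\psi$ with $\psi\in(0,\pi/2)$ keeps the ratio $\mathrm{Re}\{\lambda_2(\Ggn)\}/\mathrm{Im}\{\lambda_2(\Ggn)\}$ bounded, so the bracketed factor in~\eqref{eq:conditionsecondorder} stays bounded, while $\mathrm{Re}\{\lambda_2(\Ggn)\}\rightarrow 0$ drives the left-hand side negative for sufficiently large~$N$. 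This violates the criterion and places a root of $p_2(s)$ in the closed right half plane.

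Since the corollary is a verbatim restriction of an already-proved statement to a single eigenvalue, I do not anticipate any genuine obstacle. The only point requiring care is the bookkeeping that the substitution $\bar{l}=2$ and the resulting singleton index set align the two hypotheses exactly, which they do.
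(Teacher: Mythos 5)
Your proposal is correct and matches the paper's intent exactly: the corollary is presented as an immediate specialization of Theorem~\ref{thm:2ndorder} to $\bar{l}=2$, under which both hypotheses collapse verbatim to the corollary's assumptions, and the paper accordingly offers no separate proof. Your supplementary self-contained argument via~\eqref{eq:conditionsecondorder} also mirrors the proof of Theorem~\ref{thm:2ndorder} itself, so nothing further is needed.
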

%{This corollary suffices to show the scale fragility for directed ring graphs.  }
%\emma{If Theorem~\ref{thm:2ndorder} is still too convoluted, we could simply state the corollary instead, but we think the new version works. }
%Combining Theorems~\ref{thm:main} and~\ref{thm:2ndorder} we conclude that high-order consensus lacks {scalable stability} if $\mathrm{Re}\{\lambda_l(\Ggn)\}\rightarrow 0$ as $N\rightarrow \infty$ for some $l \in \{2,\ldots,N\}$. If the eigenvalue~$\lambda_l(\Ggn)$ also maintains a sufficiently large imaginary part, then second-order consensus too lacks {scalable stability}. 
%A particular graph family to which Theorem~\ref{thm:2ndorder} applies is directed lattices (and fuzzes) with periodic boundary conditions.
%
%\emma{Cna we say that IF there is an imag part, then the criterion will be satisfied? Check teh reference!}

\subsection{Affected classes of graphs}
Theorem~\ref{thm:2ndorder} states that if at least one Laplacian eigenvalue is complex valued and approaches the origin at a non-zero angle as ${N\rightarrow \infty}$, then second-order consensus fails to be {scalably stable}. See also Figure~\ref{fig:evcond}. A particular graph family where this applies is directed ring graphs\footnote{More precisely, a ring graph that is not undirected.} with uniform edge weights, as already observed in~\cite{Cantos2016,HermanThesis,Stuedli2017}. Here, we demonstrate that it applies to the more general family of directed lattices and fuzzes with periodic boundary conditions.

\subsubsection{Directed periodic lattices}
Consider again graphs over the $d$-dimensional toric lattice $\mathbb{Z}_M^d$ from Section~\ref{sec:graphs}, where each node is connected to its $r$~nearest neighbors in each direction (i.e., an $r$-fuzz). 
We impose location-invariant edge weights in the sense that, if $d = 1$, $w_{i,i+k} = w_k$ for all $i \in {\mathbb{Z}_M}$, $k = \pm\{1,\ldots,r\}$. This means that the corresponding graph Laplacian for $d = 1$ (a ring graph) is a $M\times M$ circulant matrix. In the higher-dimensional case, the Laplacian is the Kronecker sum of $d$ such matrices (since the graph is the Cartesian product of $d$ one-dimensional lattices). Here, we assume the Laplacian is asymmetric:

\begin{ass}
The edge weight ${w_{k} \neq w_{-k}}$ for at least one $k \in \pm\{1,\ldots,r\}$.
\label{ass:directed} 
\end{ass}

\begin{lemma}
\label{lem:dirlattice}
For the toric $r$-fuzz lattice under Assumption~\ref{ass:directed}, \vspace{-1mm} 
\[\mathrm{Re}\{\lambda_2(\Gg_N)\} = \mathcal{O}\left( \frac{1}{N^{2/d}} \right),~~\mathrm{Im}\{\lambda_2(\Gg_N)\} =\mathcal{O}\left( \frac{1}{N^{1/d}} \right). \] 
\end{lemma}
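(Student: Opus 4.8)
The plan is to reduce everything to a one-dimensional circulant computation and then read off the two orders separately, using the normality of the Laplacian together with Lemmas~\ref{lem:mirrorlemma} and~\ref{lem:fuzz} for the real part and an explicit Fourier expansion for the imaginary part.

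First I would exploit the product structure. Since the $d$-dimensional $r$-fuzz lattice is the Cartesian product of $d$ identical one-dimensional $r$-fuzz rings, its Laplacian is the Kronecker sum $L = \bigoplus_{j=1}^{d} C$ of $d$ copies of the $M\times M$ circulant matrix $C$ governing a single direction (with $N = M^d$). All factors are simultaneously diagonalized by the $d$-dimensional discrete Fourier transform, so $L$ is normal and its spectrum is $\{\sum_{j=1}^d \mu_{m_j} : (m_1,\dots,m_d)\in \mathbb{Z}_M^d\}$, where the $\mu_m$ are the eigenvalues of $C$. Because each $\mathrm{Re}\{\mu_m\}\ge 0$ with equality only at $m=0$ (the ring is connected), the smallest nonzero real part is attained by exciting a single lattice direction, i.e.\ by taking one index equal to the 1D minimizer and the rest zero; two excited directions would at least double the 1D minimum. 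This reduces the analysis of $\lambda_2(\Gg_N)$ to the one-dimensional circulant $C$ under the substitution $M = N^{1/d}$.

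Next I would write the 1D eigenvalues explicitly. For the circulant Laplacian with location-invariant weights $w_k$, $k\in\pm\{1,\dots,r\}$, one has $\mu_m = \sum_{k} w_k(1-e^{\mathrm{i}k\theta_m})$ with $\theta_m = 2\pi m/M$. Grouping the $+k$ and $-k$ terms and using that cosine is even and sine odd gives
\begin{align*}
\mathrm{Re}\{\mu_m\} &= \sum_{k=1}^{r}(w_k+w_{-k})\bigl(1-\cos k\theta_m\bigr), \\
\mathrm{Im}\{\mu_m\} &= -\sum_{k=1}^{r}(w_k-w_{-k})\sin k\theta_m .
\end{align*}
For the real part I would then invoke normality: by Lemma~\ref{lem:mirrorlemma}, $\mathrm{Re}\{\lambda_2(\Gg_N)\}$ equals the smallest nonzero eigenvalue of the symmetric part $L^s$, which is exactly the Laplacian of the undirected mirror lattice---the $d$-dimensional $r$-fuzz lattice with symmetrized weights $(w_k+w_{-k})/2$. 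Lemma~\ref{lem:fuzz} then yields $\mathrm{Re}\{\lambda_2(\Gg_N)\} = \mathcal{O}(N^{-2/d})$ directly.

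Finally I would bound the imaginary part, and here lies the only real subtlety: one must know that the mode realizing $\lambda_2$ sits at low frequency. Since $\mathrm{Re}\{\mu_m\}\sim \tfrac{1}{2}\theta_m^2\sum_k (w_k+w_{-k})k^2$ near $\theta_m=0$, the bound $\mathrm{Re}\{\lambda_2\}=\mathcal{O}(M^{-2})$ forces the minimizing index $m^\ast$ (or $M-m^\ast$) to stay bounded as $M\to\infty$, so $\theta_{m^\ast}=\mathcal{O}(1/M)$. Substituting into the imaginary-part formula and using $|\sin k\theta_{m^\ast}|\le k\,\theta_{m^\ast}$ gives $|\mathrm{Im}\{\lambda_2(\Gg_N)\}|=\mathcal{O}(1/M)=\mathcal{O}(N^{-1/d})$, as claimed. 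I expect the main obstacle to be precisely this localization-of-the-minimizer argument (together with confirming the Kronecker-sum normality needed to invoke Lemma~\ref{lem:mirrorlemma}); once it is in place both orders follow. I would also remark that under Assumption~\ref{ass:directed} the leading imaginary coefficient $\sum_k(w_k-w_{-k})k$ is nonzero in the generic case (and, for the directed ring $r=1$, in every case), so that $\mathrm{Im}\{\lambda_2\}$ is of exact order $N^{-1/d}$; combined with the $N^{-2/d}$ real part this makes $\arg\{\lambda_2\}\to \pi/2$, and Theorem~\ref{thm:2ndorder} then applies.
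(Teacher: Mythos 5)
Your proposal is correct and follows essentially the same route as the paper: reduce to the one-dimensional circulant via the Cartesian-product (Kronecker-sum) structure, write the Fourier-mode eigenvalues, and read off the two orders from the small-angle behavior of $1-\cos$ and $\sin$ at angles of order $1/M$ with $M=N^{1/d}$. The paper simply asserts the closed form of $\lambda_2$ (the lowest nonzero Fourier mode, citing an earlier reference) and Maclaurin-expands, whereas you additionally justify the low-frequency localization of the minimizing mode and route the real-part bound through Lemmas~\ref{lem:mirrorlemma} and~\ref{lem:fuzz} --- extra care on a step the paper glosses over, but not a genuinely different argument.
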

%\vspace{2mm}
\begin{proof}
The smallest (in real part) non-zero eigenvalue of the toric $r$-fuzz lattice is given by
\begin{equation}
\label{eq:l2lattice}
\lambda_2 = \sum_{\substack{k = -r  \\  k \neq 0} }^r \!\!w_i(1- \cos \left(\frac{2\pi k}{M} \right) )- \mathbf{j} \!\sum_{\substack{k = -r  \\  k \neq 0}}^r \!\! w_i\sin \left( \frac{2\pi k}{M} \right)\!,
\end{equation}
 where $M = N^{1/d}$ is the lattice size~\cite{Tegling2019}. The expression~\eqref{eq:l2lattice} is easily obtained from the case $d=1$, since the Laplacian eigenvalues of a Cartesian product of any two graphs are given by every possible sum of their respective Laplacian eigenvalues (see e.g.~\cite{Mohar1991}), and one eigenvalue is zero in each. Next, note that since $\sin(-x) = -\sin(x)$, it is only under Assumption~\ref{ass:directed} that $\mathrm{Im}\{\lambda_2\}\neq 0$. Finally, recalling that $r$ is bounded by Assumption~\ref{ass:q}, the lemma follows from Maclaurin series expansions of the real and imaginary parts. 
\end{proof}
%\vspace{0.5mm}
Lemma~\ref{lem:dirlattice} implies that $\mathrm{arg}\{\lambda_2(\Gg_N)\} \rightarrow \pi/2$ as $N\rightarrow \infty$, so the conditions in Theorem~\ref{thm:2ndorder} clearly hold.  
\begin{rem}
In fact, \eqref{eq:l2lattice} will be an eigenvalue (though not necessarily~$\lambda_2$) of a graph that results from a Cartesian product of any graph and a one-dimensional $r$-fuzz. This follows from the proof of Lemma~\ref{lem:dirlattice}. Such product graphs would thus be affected by Theorem~\ref{thm:2ndorder}.
\end{rem}
%\vspace{0.5mm}

\subsubsection{General necessary condition -- cyclicity}
Characterizing the Laplacian spectra of general directed graph families is a difficult and largely unsolved problem. Even determining the properties of graphs that have a real-valued spectrum, and which are therefore certainly \emph{not} affected by Theorem~\ref{thm:2ndorder}, is an open problem. 

A necessary condition, however, for $\Ggn$ having at least one complex eigenvalue is that $\Ggn$ has a directed cycle. This is, however, not sufficient. The term \emph{essentially cyclic graphs} has been proposed for graphs with non-real spectra, and properties of such graphs are examined in~\cite{Agaev2010}. To determine the eigenvalue behavior in~$N$ for families of such graphs, and thereby whether they are affected by Theorem~\ref{thm:2ndorder}, is an graph-theoretical endeavor that is outside the scope of the present paper. 

%An interesting avenue for future work would be to determine scalings in~$N$ for such essentially cyclic graphs.
% Characterizing scalings in $N$ for such more general graphs is left for future work. 
%A graph $\Gg_N$ will have at least one non-real Laplacian eigenvalue only if it has a cycle. This is, however, not sufficient. In 
%\todo{Write about essentially cyclic graphs. }

\begin{figure}[t]
\centering
\subfloat[][{{Scalably stable}}]{
  \includegraphics[scale = 0.18]{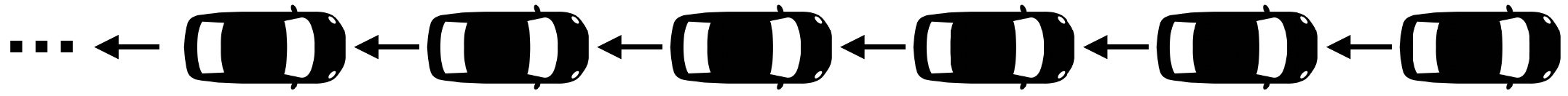}}
\hspace{6mm} 
  \subfloat[][{Not {scalably stable}}]{
  \includegraphics[scale = 0.18]{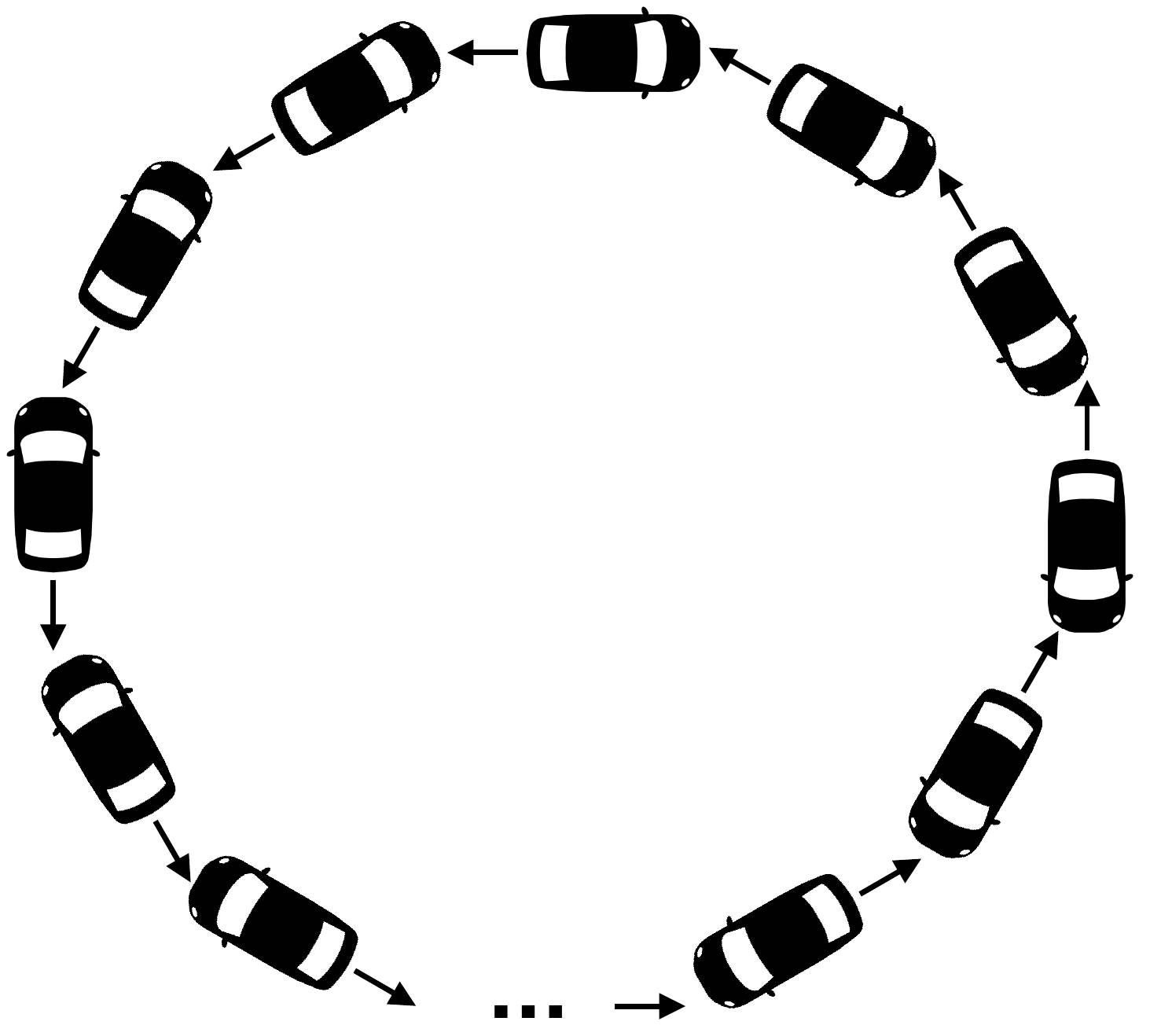}}
         \caption{
{Theorem~\ref{thm:2ndorder} reveals a scale fragility in the vehicle formation dynamics $\ddot{x}_i = -a_0(x_{i}-x_{i-1})-a_1(\dot{x}_{i}-\dot{x}_{i-1})$, where $x_i$ is vehicle $i$'s displacement. These dynamics that can model adaptive cruise control in commercial vehicles~\cite{Gunter2021}. If the vehicles drive in a circle (in this case, let $x_{-1}=x_N$), the formation is destabilized at some size $\bar{N}$. The same issue does not apply to the line formation.  } }
	%\vspace*{-1.5em}
\label{fig:linevsring}
\end{figure}

\subsection{Implications and numerical example}
\label{sec:ex2nd}
{These results} have interesting implications. First, that circular formations based on the consensus algorithm~\eqref{eq:consensuscompact} are scale fragile. 
For example, vehicles driving with adaptive cruise controllers available in modern commercial vehicles can indeed be modeled as our second-order consensus with unidirectional nearest-neighbor connections, see~\cite[Section II-A]{Gunter2021} (a constant reference spacing term can be eliminated by translating the state). If they drive in a circle, as on a ring road or as in many experimental set-ups (see e.g.~\cite{Stern2018}), our results show that the formation may be destabilized if too many vehicles join. See also Figure~\ref{fig:linevsring}. In such settings, however, {it can be possible to recover {scalable stability} by including absolute feedback.} %(see Section~\ref{sec:absolute}). 

Second, we can note that even if the feedback in a ring formation is bidirectional, that is, if the graph is undirected, it can be destabilized if a slight change in the weights renders the graph directed. Therefore, formations on undirected ring graphs are also fragile. We note that the same issues do not apply to formations on a line. The two therefore have fundamentally different scalability and robustness properties.
%
%%In practice, this problem can probably easily be solved by including absolute velocity feedback (a damping term) in the controller.  
%
%
%If we consider a circular vehicular formation with so-called look-ahead control, e.g. using modern vehicles' adaptive cruise controller, the addition of new vehicles can destabilize the network. 
%%Under asymmetric feedback, e.g. ''look ahead control'', the addition of new agents to the formation can destabilize the network. 
%Second, note that even if the feedback were symmetric, that is if the graph is undirected, it can be destabilized if a slight change in the gains renders the graph directed. 
%Note that the same issue does not apply to formations on a line. The two therefore have fundamentally different scalability properties, see also Figure~\ref{fig:linevsring}.
%
%Circular formations are relevant, for example, in vehicle platooning on ring roads as in many experimental set-ups, see e.g.~\cite{Stern2018}. In such settings, however, it is often possible to include absolute velocity feedback, which can help recover {scalable stability} (see Section~\ref{sec:absolute}). 

Figure~\ref{fig:2ndsim} shows a simulation of a growing circular vehicle formation to illustrate this section's results. Here, each vehicle's displacement $x_i$ is controlled with respect to the preceding vehicle so that $\ddot{x}_i = -w_{i,i-1}(x_{i}-x_{i-1})-3w_{i,i-1}(\dot{x}_{i}-\dot{x}_{i-1})$ for $i = 1,\ldots, N$. Let $x_{-1} = x_N$. We relax the assumption of location-invariant edge weights~$w_{ij}$ used for Lemma~\ref{lem:dirlattice}. Instead, as vehicles are added, the edge weights take random values in the interval $(0,1)$. In this example, the formation is destabilized at $\bar{N} = 14$ and the vehicles collide. 

\begin{figure}[t]
\centering
\subfloat[][{$N = 13$}]{
\begin{tikzpicture}
		\node[inner sep=0pt] (img) at (0,0)
		{ \includegraphics[width = 0.45\textwidth]{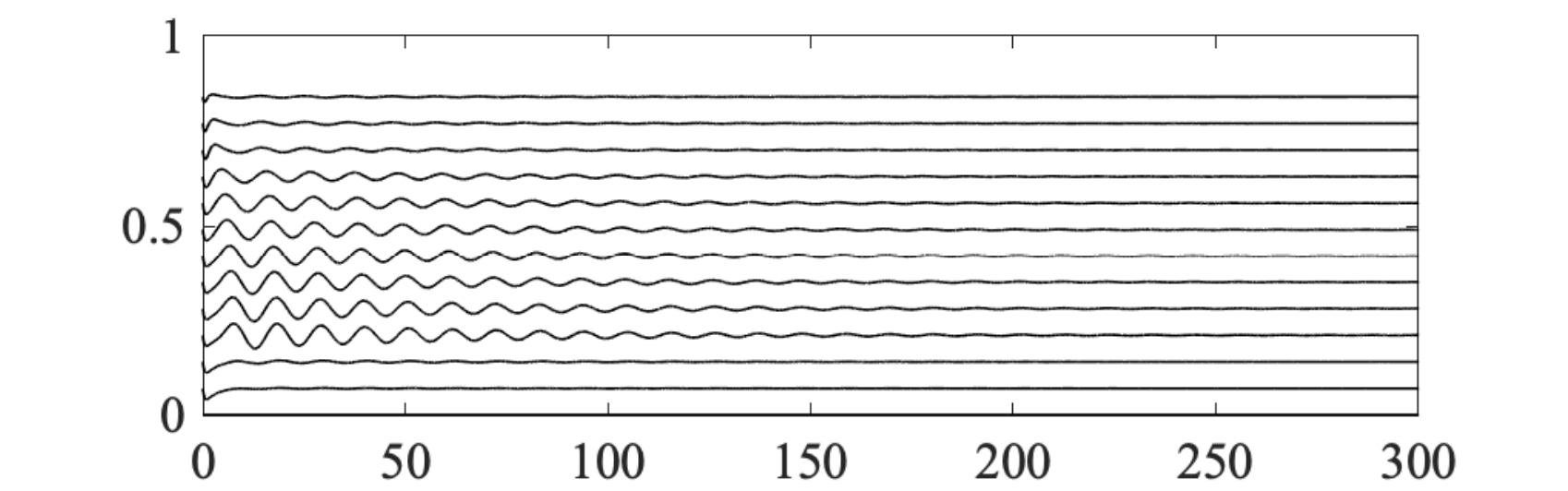}};
		\draw (0, -1.5) node {{\footnotesize Time $t$}};
		\node[xshift=-4cm,rotate=90,anchor=north]{{\footnotesize Rel. position}};
	\end{tikzpicture}
}
\\  \vspace{-2mm}
  \subfloat[][{$N = 14$}]{
  \begin{tikzpicture}
		\node[inner sep=0pt] (img) at (0,0)
		{\includegraphics[width = 0.45\textwidth]{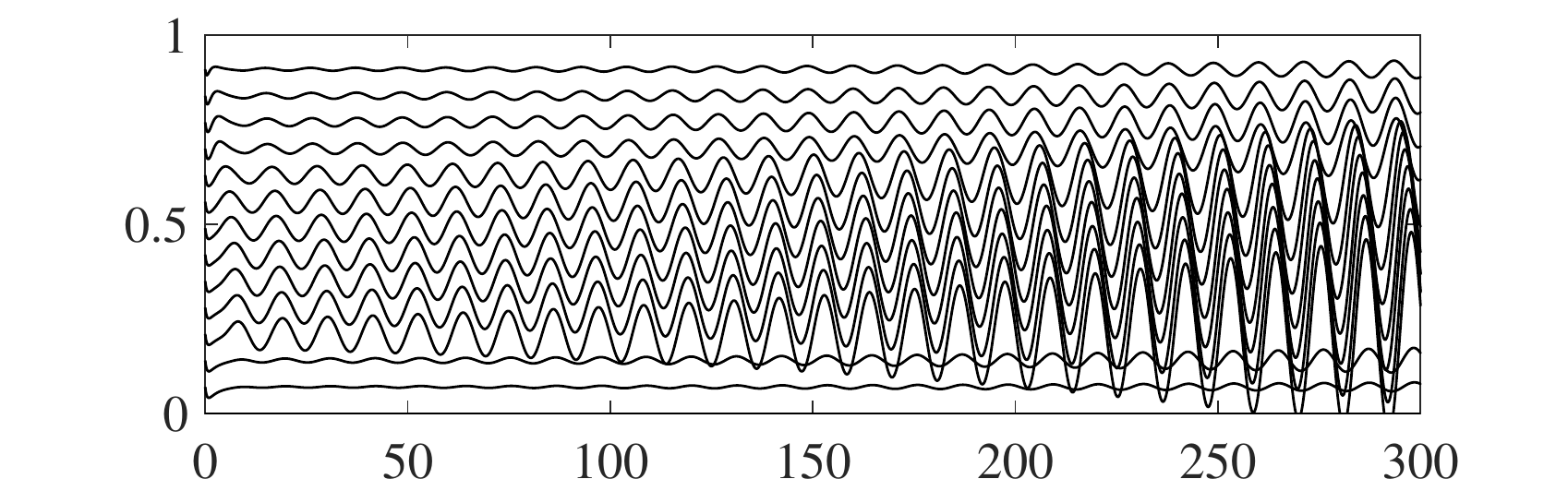}};
		\draw (0, -1.5) node {{\footnotesize Time $t$}};
		\node[xshift=-4cm,rotate=90,anchor=north]{{\footnotesize Rel. position}};
	\end{tikzpicture}
  }
         \caption{
{Simulation of a circular vehicle formation with unidirectional feedback and random edge weights. Each line represents a vehicle's position relative to vehicle~1, which has a step change in its velocity at time $t = 0$. The formation is stable with $N=13$, but the addition of a $14^{\mathrm{th}}$ vehicle destabilizes the formation. 
} }
	%\vspace*{-1.5em}
\label{fig:2ndsim}
\end{figure}

\section{Retrieving scalable stability}
\label{sec:mitigation}
Having been presented with fundamental limitations to the scalability of {modular, }localized consensus, an obvious question is how to change the algorithm, or relax assumptions on the network topology, to retrieve scalability. We next address this question from two {directions}.
%\draft{[how we would need to change the consensus protocol in order not to be subject to the limitation] }

\subsection{Relaxing the locality assumption}
\label{sec:locality}
Underlying our analysis was the assumption of locality in the sense of bounded nodal degrees, Assumption~\ref{ass:q}. Recall that under this assumption, all undirected graph families except expander families have decreasing algebraic connectivity and are thus affected by Theorem~\ref{thm:main}. If this assumption is relaxed, so that nodal neighborhoods are allowed to grow with $N$, the algebraic connectivity can remain bounded away from zero. {Scalable stability} can then be retrieved. Interestingly, it can suffice to grow neighborhoods sub-linearly. 

We show this for a ring graph topology, but note that the same result applies to any graph that is better connected due to Lemma~\ref{lem:addedge}. %an undirected ring graph where each node is connected to 

\begin{theorem}
\label{thm:qscale}
Let $\{\Gg_N\}$ be a family of undirected 1D $q/2$-fuzz lattices ($q$ even), that is, ring graphs with  edges between each node and its $q$ nearest neighbors. Then, if \[q \ge c N^{2/3} ,\]
with $c>0$ a constant independent of $N$, the sequence $\{\lambda_2(\Gg_N)\}$ is bounded away from zero as $N\rightarrow \infty$. 
\end{theorem}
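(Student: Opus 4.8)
The plan is to exploit the circulant structure of the $q/2$-fuzz ring. With location-invariant unit weights, the Laplacian is a symmetric circulant matrix, so its eigenvalues are available in closed form as $\lambda_l = 2\sum_{k=1}^{q/2}\bigl(1-\cos(2\pi k l/N)\bigr)$ for $l = 0,1,\ldots,N-1$ (the mode $l=0$ giving the simple zero eigenvalue). As established for this class of lattices in Lemma~\ref{lem:dirlattice} and~\citet{Tegling2019}, the smallest nonzero eigenvalue is the $l=1$ Fourier mode, so $\lambda_2(\Gg_N) = 2\sum_{k=1}^{q/2}\bigl(1-\cos(2\pi k/N)\bigr)$, and the whole argument reduces to bounding this single sum from below.

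First I would invoke Lemma~\ref{lem:addedge}: since adding edges can only increase $\lambda_2$, it suffices to prove the bound for the smallest admissible neighborhood $q = \lceil cN^{2/3}\rceil$. For this choice $q = o(N)$, so every argument $2\pi k/N$ with $k \le q/2$ lies well inside $[0,\pi]$, where the convexity-type estimate $1-\cos x \ge \tfrac{2}{\pi^2}x^2$ applies. This yields $\lambda_2 \ge \tfrac{16}{N^2}\sum_{k=1}^{q/2}k^2$, and using $\sum_{k=1}^{m}k^2 \ge m^3/3$ with $m = q/2$ gives $\lambda_2 \ge \tfrac{2}{3}\,q^3/N^2$. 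Substituting $q \ge cN^{2/3}$ collapses the $N$-dependence entirely, leaving $\lambda_2 \ge \tfrac{2}{3}c^3 > 0$, a bound independent of $N$. Heuristically, the exponent $2/3$ is precisely what makes the cubic growth of the eigenvalue in $q$ cancel the quadratic decay in $N$, so no smaller power of $N$ could work.

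The main obstacle is not the $l=1$ estimate, which is elementary, but rigorously justifying that $\lambda_2$ is governed by the $l=1$ mode, i.e.\ that no intermediate mode $\lambda_l$, $2\le l \le N-1$, dips below the constant. I would handle this by a two-regime split on the Fourier index, using the symmetry $l \leftrightarrow N-l$ to restrict to $\phi := 2\pi l/N \in (0,\pi]$. For small $l$ the same convexity bound, applied to the initial terms where $kl/N$ is small, reproduces a constant lower bound; for larger $l$ the Dirichlet-kernel identity $\sum_{k=1}^{m}\cos(k\phi) = \sin(m\phi/2)\cos((m+1)\phi/2)/\sin(\phi/2)$ gives $\lambda_l \ge 2\bigl(m - 1/\sin(\phi/2)\bigr)$, which is bounded away from zero once $\sin(\phi/2)$ exceeds roughly $2/m$. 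Combining the two regimes shows $\min_{l\neq 0}\lambda_l$ stays above a positive constant, confirming that the $l=1$ value indeed controls $\lambda_2$ and completing the argument.
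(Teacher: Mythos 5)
Your proposal is correct and its core computation is the same as the paper's: reduce to the circulant eigenvalue formula $\lambda_2 = 2\sum_{k=1}^{q/2}\bigl(1-\cos\tfrac{2\pi k}{N}\bigr)$, apply $1-\cos x \ge \tfrac{2}{\pi^2}x^2$ termwise, sum the squares to get a lower bound of order $q^3/N^2$, and observe that $q\ge cN^{2/3}$ makes this a constant; your arithmetic ($\lambda_2\ge\tfrac{2}{3}c^3$ for unit weights) checks out, and the appeal to Lemma~\ref{lem:addedge} to reduce to the minimal $q$ is legitimate. The one place you go beyond the paper is the final paragraph: the paper simply asserts (citing the DFT derivation in earlier work) that the smallest nonzero eigenvalue is given by the $l=1$ Fourier mode, whereas you explicitly verify that no intermediate mode $\lambda_l$ dips below a constant, via a two-regime split (convexity bound on the low-index terms for small $l$, Dirichlet-kernel bound $\lambda_l \ge 2(m - 1/\sin(\phi/2))$ for larger $l$). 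That step is only sketched ("roughly"), and if written out one has to be a little careful at the regime boundary $l\sim N/m$ --- e.g.\ by truncating the sum to the terms with $k\phi\le\pi$, which still number at least $m/2$ there --- but the plan is sound and arguably makes the argument more self-contained than the paper's, since for the theorem's conclusion it suffices (and is cleaner) to bound \emph{all} nonzero modes uniformly rather than to identify which one realizes $\lambda_2$.
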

\begin{proof}
The algebraic connectivity of $\Gg_N$ is \( \lambda_2(\Gg_N)= \sum_{k = -q/2}^{q/2} w(1-\cos\frac{2\pi k}{N}) \) if edge weights are uniform, i.e., $w_{ij} = w$ for all $(i,j)\in \Ee_N$.  
%With uniform edge weights $w_{ij} = w$ for all $(i,j)\in \Ee_N$, the algebraic connectivity of $\Gg_N$ is \( \lambda_2(\Gg_N)= \sum_{k = -q/2}^{q/2} w(1-\cos\frac{2\pi k}{N}) \). 
The derivation of this expression is based on the Discrete Fourier Transform, see e.g.~\cite{Tegling2019}. 
Therefore, in a graph with non-uniform weights, but with $w_{ij}\ge w_{\min}$, we have
\begin{align}
\nonumber
\lambda_2(\Gg_N)  & \ge  \sum_{k = -q/2}^{q/2}\!\! w_{\min}\left(1-\cos\frac{2\pi k}{N}\right)\\ \nonumber
 &= 2w_{\min}\!\left(1-\cos\frac{2\pi}{N}\right)+\cdots +2w_{\min}\!\left(1-\cos\frac{2\pi}{N}\right)\\ \nonumber
&\ge 2w_{\min} \frac{2}{\pi^2}\!\left( \!\!\left(\! \frac{2\pi}{N}\right)^2\! \!+\! \left( \frac{2\pi\cdot 2}{N}\right)^2\!\! + \!\cdots \!+ \!  \left(\! \frac{2\pi\cdot q}{N}\!\right)^2 \right)\\ \nonumber
&= \frac{16 w_{\min}}{N^2}\left(1^2 +2^2 +\cdots + q^2 \right)\\
&= \frac{16 w_{\min}}{N^2}\frac{q(q+1)(2q+1)}{6},
\label{eq:qboundderiv}
\end{align}
where the first inequality follows from Lemma~\ref{lem:addedge} and the second from the fact that $1-\cos x \ge \frac{2}{\pi^2x^2}$ for $x\in[-\pi,\pi]$. The last equality is a standard result for sums of sequences of squares.  Now, if $q\ge c N^{2/3}$, where $c$ is a positive constant, then \eqref{eq:qboundderiv} is lower bounded by \( \frac{16 w_{\min}}{N^2}\frac{2cN^2}{6} = \frac{16 c^3 w_{\min}}{6}, \) which is a positive constant independent of $N$. The Theorem follows. 
\end{proof}

%\begin{rem}
%Stating Theorem~\ref{thm:qscale} for the ring graph ($q/2$ fuzz lattice) can be motivated as follows. We sought a condition on the minimal degree $\min_{i\in \mathcal{V}_N}|\mathcal{N}_i| = q_{\min}(N)$ such that $\{\lambda_2(\Gg_N)\}$ is bounded away from zero. By Lemma~\ref{lem:addedge}, the smallest $\lambda_2(\Gg_N)$ is then attained by a regular graph where $|\Nn_i| = q_{\min} = q$ for all $i\in \Vv_N$. Our $q/2$ fuzz lattice is a regular graph that minimizes $\{\lambda_2(\Gg_N)\}$ for $q=2$. We conjecture that even for larger $q$ it is not possible to configure a regular graph family with smaller $\lambda_2$, and therefore, that allowing $q$ to scale as $N^{2/3}$ is sufficient for any graph family. The proof, however, is an open graph-theoretical problem~\todo{Add citation}.
%\end{rem}
%\begin{rem}
The sub-linear scaling in Theorem~\ref{thm:qscale} is surprising in light of well-known bounds on algebraic connectivity, which appear to require a linear scaling. One example is the {bound based on the }edge connectivity {$e(\Gg_N)$}: $\lambda_2(\Gg_N) \ge 2e(\Gg_N)(1-\cos\frac{\pi}{N})$ \cite[\S 4.3]{Fiedler1973}. Since the edge connectivity grows quadratically with the number of nearest-neighbor connections~$q$ and $(1-\cos\frac{\pi}{N})=\mathcal{O}( \frac{1}{N^2})$, this bound requires $q = \mathcal{O}(N)$. 

It is also notable that leader-follower consensus indeed requires a linear scaling of $q$. This is evident from~\eqref{eq:reducedcond}, which is a necessary stability stability condition. This again highlights an important difference in scalability between leaderless and leader-follower consensus. 
%\end{rem}

\subsection{Impact of absolute feedback}
\label{sec:absolute}
{Scalable stability} can be retrieved if the control includes \emph{absolute} state feedback (equivalent to non-zero self-weights). To keep this section brief, we look closer at this issue only for $n=3$ and undirected graph families. In this case, the control algorithm becomes 
\begin{equation}
\label{eq:consensusabsolute}
u_i = - \sum_{k = 0}^{2} \left( a_k\sum_{j \in \mathcal{N}_i} w_{ij}(x_i^{(k)} - x_j^{(k)}) - a_k^{\mathrm{abs}}x^{(k)} \right),
\end{equation}
and we say that absolute feedback from the state $x^{(k)}$ is available if one can set $a_k^{\mathrm{abs}}>0$. The closed-loop system dynamics become
\begin{multline} \nonumber
\ddt \xi =   \begin{bmatrix}
0 & I_N & 0 \\
0 & 0 & I_N \\
-a_0L \!-\!a_0^{\mathrm{abs}}I_N & -a_1L \!-\!a_1^{\mathrm{abs}}I_N& -a_2L \!-\!a_2^{\mathrm{abs}}I_N
\end{bmatrix}\!\! \xi.
\end{multline}
%
%\begin{equation*}
%\ddt \!\!\begin{bmatrix}
%x \\ \dot{x} \\ \ddot{x}
%\end{bmatrix}\!\! =\!\! \begin{bmatrix}
%0 & I_N & 0 \\
%0 & 0 & I_N \\
%-a_0L \!-\!a_0^{\mathrm{abs}}I_N & -a_1L \!-\!a_1^{\mathrm{abs}}I_N& -a_2L \!-\!a_2^{\mathrm{abs}}I_N
%\end{bmatrix}\!\!\! \begin{bmatrix}
%x \\ \dot{x} \\ \ddot{x}
%\end{bmatrix}\!\!.
%\end{equation*}
The following proposition lines out which states require absolute feedback to achieve {scalable stability}. 
\begin{proposition}
Let $\{\Ggn\}$ be an undirected graph family in which $\lambda_2 \rightarrow 0$ as $N\rightarrow \infty$. In this case, a necessary condition for {scalable stability} of the controller~\eqref{eq:consensusabsolute}, subject to Assumptions A1--A2, is that at least one of $a_1^{\mathrm{abs}}, a_2^{\mathrm{abs}} >0$.
\end{proposition}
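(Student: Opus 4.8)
The plan is to prove the contrapositive: assuming $a_1^{\mathrm{abs}} = a_2^{\mathrm{abs}} = 0$, I will exhibit a loss of stability as the network grows. First I would reuse the block-diagonalization from the proof of Theorem~\ref{thm:main}. Since the family is undirected, $L$ is symmetric, so there is an orthogonal $T$ with $T^{-1}LT = \Lambda$ diagonal and real eigenvalues $0 = \lambda_1 < \lambda_2(\Ggn) \le \cdots \le \lambda_N$. Conjugating the closed-loop system matrix following~\eqref{eq:consensusabsolute} by $\mathbf{T} = \mathrm{diag}\{T,T,T\}$ leaves the identity blocks invariant and turns each $a_kL + a_k^{\mathrm{abs}}I_N$ into the diagonal matrix $a_k\Lambda + a_k^{\mathrm{abs}}I_N$; after a permutation this decouples into $N$ scalar companion blocks, one per eigenvalue $\lambda_l$, with characteristic polynomial
\[ p_l(s) = s^3 + (a_2\lambda_l + a_2^{\mathrm{abs}})s^2 + (a_1\lambda_l + a_1^{\mathrm{abs}})s + (a_0\lambda_l + a_0^{\mathrm{abs}}). \]

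Next I would invoke the standard real-coefficient Routh--Hurwitz criterion for a cubic $s^3 + b_2 s^2 + b_1 s + b_0$, whose decisive inequality is $b_1 b_2 > b_0$ (the remaining conditions $b_2>0$, $b_0>0$ hold automatically on the block $l=2$ since $\lambda_2(\Ggn)>0$ and $a_0,a_2>0$). Setting $a_1^{\mathrm{abs}} = a_2^{\mathrm{abs}} = 0$ and specializing to $l=2$, this reads
\[ a_1 a_2\, \lambda_2(\Ggn)^2 > a_0\,\lambda_2(\Ggn) + a_0^{\mathrm{abs}}. \]
The crux is that the absolute \emph{position} gain $a_0^{\mathrm{abs}}$ enters only the constant term $b_0$, where it can only hurt, while the $s^2$- and $s$-coefficients carry no absolute feedback and therefore vanish with $\lambda_2(\Ggn)$. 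Under the hypothesis $\lambda_2(\Ggn)\to 0$ the left-hand side tends to $0$ while the right-hand side tends to $a_0^{\mathrm{abs}}\ge 0$; more precisely $a_1 a_2 \lambda_2^2 - a_0\lambda_2 = \lambda_2(a_1 a_2\lambda_2 - a_0) < 0$ for every $\lambda_2 < a_0/(a_1 a_2)$, so the inequality is \emph{strictly} violated once $\lambda_2(\Ggn)$ is small enough, i.e.\ for all sufficiently large $N$. A strict violation $b_1 b_2 < b_0$ produces two sign changes in the Routh array and hence a root of $p_2(s)$ in the open right half plane. That root is an eigenvalue of the full closed loop, so for generic initial data the state diverges and consensus cannot be reached; the design is therefore not scalably stable, which establishes necessity of $a_1^{\mathrm{abs}}>0$ or $a_2^{\mathrm{abs}}>0$.

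I expect the obstacle here to be conceptual rather than computational: the point to get right is \emph{why} position-only absolute feedback cannot help, namely that damping-type feedback (on velocity $a_1^{\mathrm{abs}}$ or acceleration $a_2^{\mathrm{abs}}$) is exactly what keeps the coefficients $b_2$ and $b_1$ bounded away from zero as connectivity vanishes, whereas $a_0^{\mathrm{abs}}$ lands in the wrong coefficient. A minor bookkeeping point is the degenerate sub-case $a_0^{\mathrm{abs}}=0$, in which the inequality collapses exactly to the relative-feedback condition $a_1 a_2\lambda_2(\Ggn) > a_0$ of Theorem~\ref{thm:main}; it is covered by the same estimate and needs no separate argument. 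Finally, I would note that convergence $\lambda_2(\Ggn)\to 0$ guarantees the violation holds for an entire tail of the family, which is all that is required to contradict scalable stability.
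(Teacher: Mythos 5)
Your proposal is correct and follows essentially the same route as the paper: diagonalize via the symmetric Laplacian, reduce to the cubic $p_l(s)$ per eigenvalue, and observe that with $a_1^{\mathrm{abs}}=a_2^{\mathrm{abs}}=0$ the decisive Routh--Hurwitz inequality $b_1b_2>b_0$ at $l=2$ becomes $a_1a_2\lambda_2^2 - a_0\lambda_2 - a_0^{\mathrm{abs}}>0$, which fails once $\lambda_2(\Ggn)$ is small enough. Your write-up is in fact slightly more complete than the paper's, since you verify the remaining Routh conditions and spell out why a strict violation yields an open right-half-plane root.
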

\begin{proof}
The proof follows that of Theorem~\ref{thm:main}, with modifications lined out as follows. With absolute feedback terms, the characteristic polynomial corresponding to~\eqref{eq:charpoly} becomes
\begin{equation*}
%\label{eq:abscharpoly}
p_l(s) = s^3 + (a_{2}\lambda_l + a_2^{\mathrm{abs}})s^2 + (a_{1}\lambda_l + a_1^{\mathrm{abs}})s + (a_{0}\lambda_l + a_0^{\mathrm{abs}}),
\end{equation*}
and the relevant stability condition is obtained from~\eqref{eq:RHcriterion} by substituting $(a_k\rell + a_k^{\mathrm{abs}})$ for $a_k\rell$. Since we let $\Ggn$ be undirected, $\lambda_l$ are real-valued, and the condition for $l = 2$ simplifies to
\begin{equation}
\label{eq:abscondreal}
(a_1\lambda_2 + a_1^{\mathrm{abs}})(a_2\lambda_2 + a_2^{\mathrm{abs}})  -a_0 \lambda_2 - a_0^{\mathrm{abs}} >0
\end{equation}
(which compares to~\eqref{eq:realRH}). If both $a_1^{\mathrm{abs}}=a_2^{\mathrm{abs}} =0$, \eqref{eq:abscondreal} is eventually violated as $\lambda_2\rightarrow 0$, regardless of $a_0^{\mathrm{abs}}$. However if at least one of $a_1^{\mathrm{abs}},a_2^{\mathrm{abs}} >0$ the condition can stay satisfied, e.g. if $a_1^{\mathrm{abs}}a_2>a_0$ or $a_2^{\mathrm{abs}}a_1>a_0$ while $a_0^{\mathrm{abs}}=0$. If both $a_1^{\mathrm{abs}},a_2^{\mathrm{abs}} >0$, it is also allowed to set $a_0^{\mathrm{abs}}>0$.
\end{proof}
%\vspace{0.5mm}

This implies that absolute feedback from the high-order terms, that is, velocity or acceleration, is necessary to render the third-order consensus algorithm scalable. Reading the proof in more detail also reveals the interesting observation that absolute feedback from positions cannot be included unless there is also absolute feedback from \emph{both} velocity and acceleration (it will ruin {scalable stability} if included with only one of the two). This is somewhat counter-intuitive, as absolute feedback is usually beneficial for performance and stability, {though often more difficult to implement, see also~\cite{Jensen2022}. }

\section{Discussion}
\label{sec:discussion}
This paper's results show that there is an important difference between the well-studied standard first-order consensus algorithm and the corresponding second- and higher-order algorithms, in that the latter are not always scalable {in a modular manner} to large networks. When subject to locality constraints, formally expressed through the network's Cheeger constant~\eqref{eq:cheeger}, high-order consensus will stop converging and become unstable at some finite network size. We remark that this result contradicts a statement made in~\cite[Section~V]{Ren2007}, that convergence to consensus of a high-order multi-vehicle network ``will not be impacted as the number of vehicles increases'' {(though the authors clearly note that controller gains must be chosen to ensure stability.)}

Second-order consensus is subject to the same scale fragility in certain families of directed networks, such as directed ring graphs. 
An interesting consequence of both results is that, at some given network size, the addition of only one agent to a multi-agent network renders a previously converging system unstable. 
 This can be thought of as a type of phase transition, though the corresponding physical intuition is presently lacking. We next discuss some further implications of the results. 

\subsection{Implications for distributed integral control}
If distributed integral control is applied to a lower-order consensus network {with relative feedback}, the closed-loop dynamics can be formulated analogously to the high-order consensus algorithm. Our results can be used to reveal conditions on such integral control for {scalable stability}. 

One example of such an integral controller is the distributed-averaging proportional-integral (DAPI) controller proposed for frequency control in electric power grids, see~\cite{Andreasson2014ACC,SimpsonPorco2013}. While in frequency control, absolute frequency feedback helps ensure {scalable stability}, the analogous control design based on relative feedback would {lack scalable stability}. 
In earlier work~\cite[Theorem 5.4]{Tegling2019} we have stated a particular stability result for distributed integral control, but the topic is far from fully explored. 

%our results can straightforwardly be extended to show that the DAPI controller would not be {scalably stable} for second-order consensus networks with only relative feedback. 

%Closed-loop dynamics on a form~\eqref{eq:closedloop} can also be obtained by considering distributed integral control applied to a lower-order consensus network. For example, when $n = 2$, we can consider adding integral control in the following manner:

\subsection{Asymptotic performance analysis}
A further interesting consequence of our results is that an analysis of the asymptotic (in network size) performance of localized, consensus-like feedback control is only possible in first- and second-order integrator networks. This means that the analysis on coherence scaling in large-scale networks in~\cite{Bamieh2012} cannot, as was conjectured there, be extended to chains of $n>2$ integrators. We also note that the analysis for second-order networks in that work hinges on the assumption of symmetric feedback, as the scale fragility from Theorem~\ref{thm:2ndorder} applies in directed tori. 

%\todo{Also mention that the analysis for $n = 2$ hinges on symmetric feedback.}

\subsection{{Modular design vs. controller re-tuning}}
%\subsection{Plug and pay vs controller re-tuning}
In order to be able to discuss a given controller's scalability in a network of increasing size, the assumption that it be fixed is necessary. {This presumes a modular design, implying that} that the controller cannot be re-tuned as the network grows. By re-tuning the consensus algorithm from this paper, either by changing the gains $a_k$, weights $w_{ij}$, or by relaxing the locality assumption, consensus can be achieved also as the network grows. However, such a re-tuning requires \emph{global} knowledge of the system and prevents a desirable ``plug-and-play'' addition of agents. Still, the design of controller re-tuning protocols is an interesting direction for future research.

\section*{Acknowledgements}
We would like to thank Swaroop Darbha, Federica Garin, Rick Middleton, and Ali Jadbabaie for their insightful comments relating to various aspects of this work. 

This work was partially supported by the Wallenberg AI, Autonomous Systems and Software Program (WASP) funded by the Knut and Alice Wallenberg Foundation, the Swedish Research Council through grants 2016-00861 and 2019-00691, the Swedish Civil Contingencies Agency (MSB) through the project CERCES2, {as well as by the NSF through awards ECCS-1932777 and CMI- 1763064.}
%% The Appendices part is started with the command \appendix;
%% appendix sections are then done as normal sections
%% \appendix

%% \section{}
%% \label{}

%% If you have bibdatabase file and want bibtex to generate the
%% bibitems, please use
%%
\appendix
\section{Routh-Hurwitz criteria}
\label{RHapp}
{We state the Routh-Hurwitz criteria for polynomials with complex coefficients as they appear in~\cite[pp 21f]{Tondl1965}. }

\begin{applemma}
Consider the polynomial 
\begin{equation}
\label{eq:examplepoly}
p(\mu) = \mu^n + (f_{n-1} + \mathbf{j} g_{n-1})\mu^{n-1} +\ldots (f_0 + \mathbf{j} g_0) = 0,
\end{equation} 
where $\mathbf{j} = \sqrt{-1}$ denotes the imaginary unit.
The roots $\mu$ will be such that $\mathrm{Im}\{\mu\}>0$ if and only if all inequalities 
% \[-\Delta_2 = -\begin{vmatrix}
%1 & f_{n\!-\!1}\\ 0 & g_{n\!-\!1}
%\end{vmatrix} >0,~~\Delta_4 = \begin{vmatrix}
%1 & f_{n\!-\!1} & f_{n\!-\!2} & f_{n\!-\!3} \\ 0 & g_{n\!-\!1}& g_{n\!-\!2} & g_{n\!-\!3} \\ 0 & 1 & f_{n\!-\!1} & f_{n\!-\!2}\\ 0& 0 & g_{n\!-\!1}& g_{n\!-\!2} \\
%\end{vmatrix}\! >\!0,  \] \vspace{-4mm}
\begin{multline}
\label{eq:Tondlcrit}
-\Delta_2 = -\begin{vmatrix}
1 & f_{n\!-\!1}\\ 0 & g_{n\!-\!1}
\end{vmatrix} >0,~~\Delta_4 = \begin{vmatrix}
1 & f_{n\!-\!1} & f_{n\!-\!2} & f_{n\!-\!3} \\ 0 & g_{n\!-\!1}& g_{n\!-\!2} & g_{n\!-\!3} \\ 0 & 1 & f_{n\!-\!1} & f_{n\!-\!2}\\ 0& 0 & g_{n\!-\!1}& g_{n\!-\!2} \\
\end{vmatrix}\! >\!0,\\
,~\cdots~,~~~~~~~~~~~~~~~~~~~~~~~~~~~~~~~~~~~~~~~~~~~~~~\\ ~(-1)^n\Delta_{2n} \!= (-1)^n \!\begin{vmatrix}
1 & f_{n\!-\!1}& \cdots & f_0 & 0 &\cdots &\cdots & 0\\
0 & g_{n\!-\!1} & \cdots & g_0 & 0& \cdots &\cdots & 0\\
0 & 1 & \cdots & f_{1} & f_0 & 0 & \cdots & 0\\
0 & 0 & \cdots & g_{1} & g_0 & 0 & \cdots & 0\\
&&& \vdots &&& &\\
0 & \cdots & \cdots & 0 & 1 & \cdots & f_{1} & f_0\\
0 & \cdots & \cdots & 0 & 0 & \cdots & g_{1} & g_0\\
\end{vmatrix} \!>\!0
\end{multline}
are satisfied. 
\end{applemma}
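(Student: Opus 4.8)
The plan is to prove this as a winding-number (argument-principle) statement and then translate the winding count into the stated determinant inequalities through a Routh-type recursion, following the route used by Gantmacher for the complex Hurwitz problem. First I would evaluate $p$ along the real axis, writing $p(x) = U(x) + \mathbf{j}V(x)$ for real $x$, where $U(x) = x^n + f_{n-1}x^{n-1} + \cdots + f_0$ collects the real parts of the coefficients and $V(x) = g_{n-1}x^{n-1} + \cdots + g_0$ the imaginary parts, so that $\deg U = n$ and $\deg V \le n-1$. The condition $\mathrm{Im}\{\mu\}>0$ for every root is precisely the statement that all $n$ roots of $p$ lie strictly inside the open upper half-plane.

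Next I would apply the argument principle on the boundary of a large upper half-disk, oriented counterclockwise. On the semicircular arc $p(\mu)\sim \mu^n$, so the argument increases by $n\pi$ there; hence all $n$ roots lie in the open upper half-plane if and only if $p$ has no real root and the net increment of $\arg p(x)$ as $x$ runs from $-\infty$ to $+\infty$ equals $2\pi n - n\pi = n\pi$. Since $\arg(U+\mathbf{j}V)$ jumps by $+\pi$ precisely when the planar curve $x\mapsto (U(x),V(x))$ crosses the imaginary axis in the positive sense (at a simple real root of $U$), this increment equals $\pi$ times the Cauchy index $I_{-\infty}^{+\infty}(V/U)$. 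The target condition thus becomes $I_{-\infty}^{+\infty}(V/U) = n$, which forces $U$ to possess $n$ simple real roots that strictly interlace those of $V$, with all jumps of $V/U$ of the same sign --- exactly the (generalized) Hermite--Biehler interlacing picture.

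The final, computational step is to evaluate this Cauchy index algebraically. I would run the Euclidean / Sturm-type remainder recursion on the pair $(U,V)$; the index $I_{-\infty}^{+\infty}(V/U)$ is then read off from the signs of the successive leading coefficients produced by the recursion, equivalently from the sign changes of the associated Routh array. By the standard determinant identities that express the Routh-array pivots as ratios of consecutive Hurwitz minors, the requirement that the index attain its maximal value $n$ is equivalent to the nonvanishing of every $\Delta_{2k}$ together with a fixed alternating pattern of their signs. Matching the orientation convention fixed in the argument-principle step --- which introduces a factor $(-1)^k$ relative to the usual left-half-plane normalization --- yields precisely $-\Delta_2>0$, $\Delta_4>0$, $\ldots$, $(-1)^n\Delta_{2n}>0$ as in \eqref{eq:Tondlcrit}, and the equivalence is obtained in both directions.

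The main obstacle I expect lies in this last step: verifying the determinant identities that tie the Routh/Sturm pivots to the specific minors $\Delta_{2k}$, and carefully tracking how the upper-half-plane convention (rather than the customary left-half-plane one) propagates into the alternating signs $(-1)^k$. Care is also needed with the degenerate cases --- a vanishing pivot, a real root of $U$, or a nontrivial common factor of $U$ and $V$ --- since each corresponds either to a root on the real axis or to an index deficit, and each must be shown to violate at least one of the inequalities, so that the stated conditions are established as necessary and not merely sufficient.
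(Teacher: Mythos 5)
The paper does not actually prove this lemma: it is stated as a direct quotation of the criteria from Tondl (1965, pp.\ 21f), so there is no internal argument to compare yours against --- the paper buys the result by citation. Your outline is, in substance, the classical derivation of the complex Routh--Hurwitz criterion in the Gantmacher/Frank tradition: reduce location of all roots of \eqref{eq:examplepoly} in the open upper half-plane to an argument-principle count over a large upper half-disk (arc contributes $n\pi$, so the real-axis increment must be $n\pi$), convert that phase increment into a Cauchy index of the pair $(U,V)$, recognize the extremal index as the generalized Hermite--Biehler interlacing of $U$ and $V$, and evaluate the index by a Sturm/Routh remainder chain whose pivots are ratios of consecutive even-order minors $\Delta_{2k}$, giving the alternating pattern $(-1)^k\Delta_{2k}>0$ of \eqref{eq:Tondlcrit}. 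This is a correct and essentially complete plan, and it is the proof that stands behind the cited criteria; compared with the paper's citation, your route has the merit of exposing the mechanism (for instance, it makes transparent why only the first two inequalities, i.e.\ \eqref{eq:firstcond}--\eqref{eq:secondcond}, carry the load in the main text). Your list of delicate points --- the determinant identities tying pivots to the $\Delta_{2k}$, and the degenerate cases (vanishing pivot, real root of $U$, common factor of $U$ and $V$) needed for necessity --- is exactly where the real work sits.

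One bookkeeping slip to repair before writing this out: with the standard convention that the Cauchy index counts $-\infty\to+\infty$ jumps positively, a crossing of the imaginary axis in the positive sense ($V>0$ with $U$ passing from $+$ to $-$) makes $V/U$ jump from $+\infty$ to $-\infty$, so the phase increment is $\Delta \arg p = -\pi\, I_{-\infty}^{+\infty}(V/U)$, and the target condition is $I_{-\infty}^{+\infty}(V/U)=-n$, not $+n$. A sanity check with $n=1$, $p(x)=x-\mathbf{j}$ (root at $\mathbf{j}$, $U=x$, $V=-1$) gives $\Delta\arg p=\pi$ while $I_{-\infty}^{+\infty}(V/U)=-1$; consistently, the lemma's first condition $-\Delta_2=-g_0>0$ holds since $g_0=-1$. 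The slip only flips the intermediate normalization and is absorbed by the orientation factor $(-1)^k$ you already plan to track, but if carried through as written it would produce the wrong alternation of signs in \eqref{eq:Tondlcrit}.
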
 
 
{Evaluating the determinants, the first two inequalities (which suffice to prove the theorems in this paper) read }   %\vspace{-0.5mm}
\begin{align} \label{eq:firstcond}
g_{n\!-\!1}&<0,\\
\label{eq:secondcond}
 f_{n\!-\!1}g_{n\!-\!1}g_{n\!-\!2} - f_{n\!-\!2}^{\phantom{2}}g_{n\!-\!1}^2 + g_{n\!-\!3}g_{n\!-\!1} - g_{n\!-\!2}^2 &>0,
\end{align} 
{for $n\ge 3$. }
 
%{Note that Lemma~A.1 gives a condition for all roots being in the upper half of the complex plane. To obtain a condition for poles in the left half plane ($\mathrm{Re}\{s\}<0$), we consider }
%\begin{equation}
%\label{eq:examplepoly2}
%p(s) = s^n + b_{n-1}s^{n-1}+\ldots + b_1s + b_0,
%\end{equation}
%{ with $b_k \in \mathbb{C}$, and substitute $\mu = -\mathbf{j}s$ in~\eqref{eq:examplepoly} before identifying the coefficients from~\eqref{eq:examplepoly2}.  Those coefficients that appear in~\eqref{eq:firstcond}--\eqref{eq:secondcond} are 
%$f_{n-1} = \mathrm{Im}\{ b_{n-1} \},$ $g_{n-1} = -\mathrm{Re}\{b_{n-1}\},$ $f_{n-2} = -\mathrm{Re}\{b_{n-2} \},$ $g_{n-2} = -\mathrm{Im}\{b_{n-2}\},$ $f_{n-3} = -\mathrm{Im}\{b_{n-3}\},$  $g_{n-3} = \mathrm{Re}\{b_{n-3}\}$. Note that these identifications hold regardless of~$n$, as the coefficient of the highest order term is set to 1 in both~\eqref{eq:examplepoly2} and \eqref{eq:examplepoly}.}

{Note that Lemma~A.1 gives a condition for all roots being in the upper half of the complex plane. To obtain a condition for poles in the left half plane ($\mathrm{Re}\{s\}<0$), we substitute $\mu = -\mathbf{j}s$ in~\eqref{eq:examplepoly} and identify the coefficients with the polynomial }
\begin{equation}
\label{eq:examplepoly2}
p(s) = s^n + b_{n-1}s^{n-1}+\ldots + b_1s + b_0.
\end{equation}
{ Those coefficients that appear in~\eqref{eq:firstcond}--\eqref{eq:secondcond} are then
$f_{n-1} = \mathrm{Im}\{ b_{n-1} \},$ $g_{n-1} = -\mathrm{Re}\{b_{n-1}\},$ $f_{n-2} = -\mathrm{Re}\{b_{n-2} \},$ $g_{n-2} = -\mathrm{Im}\{b_{n-2}\},$ $f_{n-3} = -\mathrm{Im}\{b_{n-3}\},$  $g_{n-3} = \mathrm{Re}\{b_{n-3}\}$. Note that these identifications hold regardless of~$n$, as the coefficient of the highest order term is set to~1 in both~\eqref{eq:examplepoly2} and \eqref{eq:examplepoly}.}

%\section*{References}
  \bibliographystyle{elsarticle-num} 
\bibliography{emmasbib2015,BassamBib,consrefs}

%% else use the following coding to input the bibitems directly in the
%% TeX file.

%\begin{thebibliography}{00}

%% \bibitem{label}
%% Text of bibliographic item

%\bibitem{}

%\end{thebibliography}
\end{document}
\endinput
%%
%% End of file `elsarticle-template-num.tex'.